\documentclass[11pt]{article}
\usepackage[colorlinks=false]{hyperref}
\usepackage{amsfonts, amsmath, amssymb, amsthm, tikz, caption, subcaption}
\usepackage[shortlabels]{enumitem}
\usepackage[all]{xy}

\usetikzlibrary{plotmarks, calc, arrows.meta, decorations.pathmorphing}

\definecolor{darkgray}{RGB}{64,64,64}
\definecolor{litegray}{RGB}{192,192,192}
\definecolor{red}{rgb}{.859375,.265625,.21484375}
\definecolor{blue}{rgb}{.26171875,.51953125,.95703125}

\oddsidemargin 0pt
\evensidemargin 0pt
\marginparwidth 40pt
\marginparsep 10pt
\topmargin -20pt
\headsep 10pt
\textheight 8.7in
\textwidth 6.65in
\linespread{1.2}

\title{Finding a best approximation pair of points for two polyhedra}
\author{Ron Aharoni\thanks{Department of Mathematics, the Technion -- Israel Institute of Technology, Technion City, Haifa 3200003, Israel.}${\ }^{,}$\footnote{Supported in part by the United States--Israel Binational Science Foundation (BSF) grant no. 2012031, the Israel Science Foundation (ISF) grant no. 2023464 and the Discount Bank Chair at the Technion. Email: {\tt ra@tx.technion.ac.il}.} \and Yair Censor\footnote{Department of Mathematics, University of Haifa, Mt. Carmel, Haifa 3498838, Israel. Supported in part by BSF grants no. 2013003. Email: {\tt yair@math.haifa.ac.il}.} \and Zilin Jiang\footnotemark[1]${\ }^{,}$\footnote{Supported in part by ISF grant nos 1162/15, 936/16. Email: {\tt jiangzilin@technion.ac.il}.}}
\date{}

\newtheorem{theorem}{Theorem}

\newtheorem{lemma}[theorem]{Lemma}

\theoremstyle{definition}

\theoremstyle{remark}
\newtheorem{remark}{Remark}

\newcommand{\N}{\mathbb{N}}
\newcommand{\rd}{\mathbb{R}^d}

\newcommand{\abs}[1]{\left\lVert {#1}\right\rVert}

\newcommand{\dset}[2]{\left\{#1\mid#2\right\}}
\newcommand{\sset}[1]{\left\{#1\right\}}
\newcommand{\dseq}[4]{\sset{#1}_{#2=#3}^{#4}}
\newcommand{\sseq}[4]{\dseq{{#1}_{#2}}{#2}{#3}{#4}}

\begin{document}

\maketitle

\begin{abstract}
  Given two disjoint convex polyhedra, we look for a best approximation pair relative to them, i.e., a pair of points, one in each polyhedron, attaining the minimum distance between the sets. Cheney and Goldstein showed that alternating projections onto the two sets, starting from an arbitrary point, generate a sequence whose two interlaced subsequences converge to a best approximation pair. We propose a process based on projections onto the half-spaces defining the two polyhedra, which are more negotiable than projections on the polyhedra themselves. A central component in the proposed process is the Halpern--Lions--Wittmann--Bauschke algorithm for approaching the projection of a given point onto a convex set.
\end{abstract}

\noindent\textbf{Keywords:} Best approximation pair, convex polyhedra, alternating projections, half-spaces, Cheney--Goldstein theorem, Halpern--Lions--Wittmann--Bauschke algorithm

\noindent\textbf{Mathematics Subject Classification:} 65K05, 90C20, 90C25

\section{Introduction}\label{sec:Intro}

A \textit{best approximation pair }relative to two closed convex sets $A$ and $B$ is a pair $(a,b)\in A\times B$ attaining $\abs{a-b}=\min\abs{A-B}$, where $A-B:=\dset{x-y}{x\in A, y\in B}$.

For a closed convex set $C$ denote by $P_{C}$ the metric projection operator onto $C$. Take an arbitrary starting point $a_{0}\in\rd$, the $d$-dimensional Euclidean space, and consider the sequence:
\[
\xymatrix@R=0pt{a_{0}\ar[rdd]_{P_{B}} &  & a_{2}\ar[rdd]_{P_{B}} &  &  &  & a_{2k}\ar[rdd]_{P_{B}} &  & a_{2k+2}\\
 &  &  &  & \cdots &  &  &  &  & \cdots\\
 & b_{1}\ar[uur]_{P_{A}} &  & b_{3} &  & b_{2k-1}\ar[uur]_{P_{A}} &  & b_{2k+1}\ar[uur]_{P_{A}}
}
\]

A well-known theorem of Cheney and Goldstein~\cite{MR0105008} specifies conditions under which alternating metric projections onto the two sets are guaranteed to converge to the best approximation pair. In fact, their result applies when $A$ and $B$ are closed convex sets in a Hilbert space, and one of them is compact. For related results, using the averaged alternating reflections method and applying it to not necessarily convex sets, see \cite{MR2058156, MR2425037}. For a study of von Neumann's alternating projection algorithm for two sets, see \cite{MR1239403} and \cite{MR3034852}. For best approximation in general, we refer the reader to Deutsch's excellent book~\cite{MR1823556}. A recent review of algorithms for inconsistent feasibility problems, some of which relevant to the work presented here, appears in \cite{Censor:2018aa}.

In real-life problems, convex polyhedra are usually represented by a set of linear constraints, namely as the intersection of half-spaces. Projecting onto the polyhedron can then be done using projections onto the half-spaces. We propose a projection method for finding the best approximation pair that uses directly projections onto the half-spaces, instead of on the polyhedra.

An algorithm for approaching the projection of a point $a$ onto a polyhedron, using projections onto the half-spaces defining the polyhedron, was proposed by Halpern, Lions, Wittmann and Bauschke (HLWB)\footnote{This acronym was dubbed in \cite{MR2232923}.} \cite{MR0218938, MR0470770, MR1156581, MR1402593}. The HLWB algorithm works by projecting successively and cyclically onto the half-spaces, the main stratagem being that after each projection the algorithm ``pulls'' a bit back in the direction of $a$. The latter guarantees that the algorithm does not ``forget'' the point $a$ whose projection onto the polyhedron is sought after.

We propose and study the convergence of an iterative process based on projections onto the individual half-spaces defining the polyhedra, which are more negotiable than projections on the polyhedra themselves. We apply the HLWB algorithm alternatingly to the two polyhedra. Its application is divided into sweeps --- in the odd numbered sweeps we project successively onto half-spaces defining $A$, and in even numbered sweeps onto half-spaces defining $B$. A critical point is that the number of successive projections onto each set's half-spaces increases from sweep to sweep. The proof of convergence of the algorithm is rather standard in the case that the best approximation pair is unique. The non-uniqueness case, however, poses some difficulties and its proof is more involved.

The algorithm belongs to a family known as \textit{projection methods}. These are iterative algorithms that use projections onto individual sets, to converge to a point in the intersection of these sets, or images of them under some transformation. They were originally used to solve systems of linear equations in Euclidean space (see, e.g., \cite{MR2364086, MR2931682, censor1997parallel, MR2849884, MR2041220}), and later were  extended to solve general convex feasibility problems in a Hilbert space, see, e.g., \cite{MR3616647}. On the low computational cost of projection methods, see \cite{MR3155358, MR2891928}. Consult also \cite{MR3391223}.

The paper is organized as follows. In Section~\ref{algorithm} we present the alternating HLWB algorithm. In Section~\ref{proof} we prove some preliminary results needed for the proof of convergence of the algorithm. The proof itself is given in Section~\ref{sec:Proof}. Finally, in Section~\ref{discussion} we discuss possible choices of the parameters for the algorithm.

\section{An Alternating HLWB Algorithm}\label{algorithm}

Throughout the paper, we assume that $A:=\cap_{i=1}^{M}A_{i}$ and $B:=\cap_{j=1}^{N}B_{j}$ are two nonempty convex polyhedra, where $\sseq{A}{i}{1}{M}$ and $\sseq{B}{j}{1}{N}$ are two families of closed half-spaces. By adding $A_{i}$ or $B_{j}$ that are equal to the entire space $\rd$ (or alternatively repeat the same half-space) we may assume that $M=N$. For the purpose of performing unboundedly many projections, we extend the sequences $\sset{A_{i}} $ and $\sset{B_{j}} $ to all $i, j\in\N$ by the rules $A_{i}=A_{i\bmod N}$ and $B_{j}=B_{j\bmod N}$, where the $\bmod N$ function takes values in $\sset{1,2,\dots,N}$.

We incorporate into our algorithm the HLWB algorithm, which is designed to find the projection of a point $a$ onto a polyhedron $C$, using the projections onto the half-spaces defining $C$. Let $P_1, P_2, \ldots ,P_N$ be the respective projections onto these half-spaces. The HLWB algorithm starts by choosing an arbitrary starting point $x_0$ and numbers $\lambda_n$ satisfying: 
\begin{equation}\label{ctrl-1}
\lim_{n\to\infty}\lambda_{n}=0,\quad{\textstyle \sum_{n}\lambda_{n}=\infty,\quad{\textstyle \sum_{n}|{\lambda_{n}-\lambda_{n+N}}|<\infty.}}
\end{equation}
A sequence $\sseq{x}{n}{1}{\infty}$ is then recursively generated by the rule:
\begin{equation*}
x_{n}:=\lambda_{n}a+(1-\lambda_{n}){P_{n\bmod N}}(x_{n-1}),
\end{equation*}

Bauschke~\cite[Theorem 3.1]{MR1402593} proved that the sequence
$\sseq{x}{n}{0}{\infty}$ generated by this HLWB algorithm convergences to $P_{C}(a)$. Some computational performance results with the HLWB and the Dykstra~\cite{MR727568} algorithms were presented in \cite{MR2232923}.

In our proposed algorithm, we apply the HLWB algorithm alternatingly to $A$ and to $B$. We call this method ``A-HLWB'' (``A'' for ``alternating''). Like in HLWB, we choose numbers $\lambda_{n}$ satisfying \eqref{ctrl-1}. For points $a,x\in\rd$ and $n\in\N$, we recursively define
\[
  Q_{B,0}(a;x):=x\quad\text{and}\quad 
  Q_{B,n}(a;x):=\lambda_{n}a+(1-\lambda_{n})P_{B_{n}}(Q_{B,n-1}(a;x)).
\]
For $b,x\in\rd$ and $n\in\N$, $Q_{A,n}(b;x)$ is similarly defined
\[
  Q_{A,0}(b;x):=x\quad\text{and}\quad 
  Q_{A,n}(b;x):=\lambda_{n}b+(1-\lambda_{n})P_{A_{n}}(Q_{A,n-1}(b;x)).
\]
Thus, $Q_{A,n}(b;x)$ and $Q_{B,n}(a;x)$ are operators, each being defined by a
sequence of $n$ iterations.

We also choose an arbitrary starting point $a_{0}\in\rd$ and a non-decreasing
sequence $(n_{k})$ such that
\begin{equation}\label{iter}
  n_{k} \to \infty \quad \text{and} \quad
  \sup_{k_{0}}\sset{\sum_{k>k_{0}}\prod_{n>n_{k_{0}}}^{n_{k}}(1-\lambda_{n})} <
  \infty.
\end{equation}
Once the sequence $(\lambda_{n})$ is chosen so that \eqref{ctrl-1} holds, one can always make $(n_{k})$ increase rapidly enough so that \eqref{iter} holds. For example, $\lambda_{n}=\tfrac{1}{n+1},n_{k}=\lfloor1.1^{k}\rfloor$ satisfy both \eqref{ctrl-1} and \eqref{iter}. To see why these parameters satisfy the second inequality in \eqref{iter}, simply notice that $$\sum_{k > k_0}\prod_{n > n_{k_0}}^{n_k}(1-\lambda_n) = \sum_{k > k_0}\prod_{n > n_{k_0}}^{n_k}\frac{n}{n+1} = \sum_{k > k_0}\frac{n_{k_0}+1}{n_k+1} \approx \sum_{k > k_0}\frac{1.1^{k_0}}{1.1^k} = 10.$$

The $k$th sweep of the A-HLWB algorithm uses $n_k$ iterations of the HLWB algorithm to generate:
\begin{equation}\label{round}
  b_{k+1}=Q_{B,n_{k}}(a_{k};a_{k}')\text{ if }k\text{ is even}; \quad
  a_{k+1}=Q_{A,n_{k}}(b_{k};b_{k}')\text{ if }k\text{ is odd},
\end{equation}
where the auxiliary parameter $a_{k}'$ or $b_{k}'$ is chosen before each sweep. The validity of the algorithm is guaranteed if the auxiliary sequence $(a_{2k}',b_{2k+1}')$ is bounded. For example, one may simply take $a_{2k}'=b_{2k+1}'=a_{0}$. We now state our main convergence result.

\begin{theorem}\label{main}
  If the above assumptions on the mappings and on the parameters hold and the auxiliary sequence $(a_{2k}',b_{2k+1}')$ is bounded, then the pairs $(a_{2k},b_{2k+1})$, generated by the A-HLWB algorithm~\eqref{round}, converge to a best approximation pair relative to $(A,B)$.
\end{theorem}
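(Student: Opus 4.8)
The plan is to treat the A-HLWB iteration as an inexact form of the exact alternating-projection map $T:=P_AP_B$, whose convergence is guaranteed by the Cheney--Goldstein theorem, and to bound the cumulative discrepancy introduced by replacing exact projections by finitely many HLWB steps. First I would record two stability properties of the operators $Q_{A,n}$ and $Q_{B,n}$, both proved by induction from the nonexpansiveness of the half-space projections and the convex-combination structure of the recursion: for a fixed starting point, $Q_{B,n}(\cdot\,;x)$ is nonexpansive in its target; and for a fixed target, the dependence on the starting point contracts, $\abs{Q_{B,n}(a;x)-Q_{B,n}(a;x')}\le\bigl(\prod_{m=1}^{n}(1-\lambda_m)\bigr)\abs{x-x'}$, with the analogous statement for $Q_{A,n}$. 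Together with Bauschke's theorem these give $Q_{B,n}(a;x)\to P_B(a)$ as $n\to\infty$, uniformly for $a,x$ ranging over a bounded set, the nonexpansiveness supplying the needed equicontinuity.

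Next I would prove that the iterates $(a_{2k})$ and $(b_{2k+1})$ are bounded, using the assumed boundedness of the auxiliary sequence, the starting-point contraction above, and the fact that exact projections do not move points far from the two sets. Boundedness plus the uniform HLWB convergence then forces the one-step errors $\delta_k:=\abs{(\text{output of sweep }k)-P(\text{input of sweep }k)}$ to tend to $0$. From $\delta_k\to0$ I would deduce, exactly as in the exact case, that $\abs{a_{2k}-b_{2k+1}}\to d:=\min\abs{A-B}$, that the displacement $b_{2k+1}-a_{2k}$ converges to the unique minimal displacement vector, and that $\operatorname{dist}(a_{2k},A)\to0$ and $\operatorname{dist}(b_{2k+1},B)\to0$. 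Consequently every limit point of $(a_{2k},b_{2k+1})$ lies in $A\times B$ and realizes the distance $d$, hence is a best approximation pair. When this pair is unique the theorem follows at once, since a bounded sequence all of whose limit points coincide must converge; this is the routine case.

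The hard part will be the non-unique case, where I must rule out drift of $(a_{2k})$ along the possibly higher-dimensional solution set. For this I would show that $(a_{2k})$ is quasi-Fej\'er monotone with respect to $\operatorname{Fix}(T)$, the set of $A$-components of best approximation pairs: since $T=P_AP_B$ is a composition of two metric projections, it is nonexpansive and fixes exactly these points, so $\abs{a_{2k+2}-a^\ast}\le\abs{a_{2k}-a^\ast}+\abs{e_k}$ for every $a^\ast\in\operatorname{Fix}(T)$, where $e_k:=a_{2k+2}-T(a_{2k})$ is the error accrued over one double sweep. The decisive estimate is $\sum_k\abs{e_k}<\infty$. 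I would split each $\delta_k$ into a starting-point part, bounded by $\bigl(\prod_{n=1}^{n_k}(1-\lambda_n)\bigr)$ times a bounded factor and hence summable directly by the second condition in~\eqref{iter}, and a finite-iteration part, whose summability along the sweep indices is what the rapid growth of $(n_k)$ built into~\eqref{iter} is meant to guarantee. Summable errors make the quasi-Fej\'er inequality effective: $\abs{a_{2k}-a^\ast}$ converges for each $a^\ast$, and combined with the already-established fact that all limit points lie in $\operatorname{Fix}(T)$ this pins the sequence to a single limit $a^\ast$. Finally $b_{2k+1}\to P_B(a^\ast)=:b^\ast$, and $a^\ast=P_A(b^\ast)$, so $(a^\ast,b^\ast)$ is a single best approximation pair.

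I expect the genuinely delicate point to be the summability of the finite-iteration error; controlling it is where conditions~\eqref{ctrl-1} and~\eqref{iter}, and in particular the interplay between $\lambda_n\to0$ and the geometric growth of $(n_k)$, are indispensable, and it is precisely this estimate that separates the involved non-unique case from the straightforward unique one.
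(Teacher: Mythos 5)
Your overall architecture --- boundedness, uniform convergence of the inner HLWB loops, identification of all limit points as fixed points of $P_AP_B$, and immediate conclusion in the unique case --- matches the paper's up to that point. But in the non-unique case you take a genuinely different route (quasi-Fej\'er monotonicity with summable errors), and that route has a real gap at exactly the step you flag as decisive: the claim that $\sum_k\abs{e_k}<\infty$, where $e_k$ is the per-sweep discrepancy between the A-HLWB output and the exact alternating projection. Your splitting handles only half of this error. The starting-point part is indeed bounded by $\bigl(\prod_{n>n_{2k_0}}^{n_k}(1-\lambda_n)\bigr)$ times a bounded factor and is summable by \eqref{iter}. The other part, however, is the quantity $\abs{Q_{B,n_k}(a;x)-P_B(a)}$ itself (with, say, $x=P_B(a)$), i.e., the error of the HLWB inner loop after $n_k$ steps. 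Bauschke's theorem, which is the only tool available here, is purely asymptotic: it gives $Q_{B,n}(a;x)\to P_B(a)$ with no rate whatsoever, and neither \eqref{ctrl-1} nor \eqref{iter} says anything about this rate --- \eqref{iter} constrains only the product $\prod(1-\lambda_n)$, which governs the dependence on the starting point $x$, not the speed at which the anchored iteration approaches $P_B(a)$. (Note that even initializing at $x=P_B(a)$ does not help: the very first step already pulls the iterate a distance $\lambda_1\abs{a-P_B(a)}$ away from $P_B(a)$.) So the "finite-iteration part" of $e_k$ is only known to tend to $0$, not to be summable, and the quasi-Fej\'er argument collapses without summability.

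The paper avoids needing any rate for the inner loop by a local geometric argument instead. It takes an accumulation point $s$ in the relative interior of a face $F_e$ of the nearest-point set $T=A\cap(B+v)$ of maximal dimension $e$ among faces meeting the accumulation set, and chooses a neighborhood of $s$ (and of $P_B(s)$) in which every half-space projection $P_{A_m}$, $P_{B_n}$ moves points only \emph{orthogonally} to $F_e$. Consequently the tangential drift of the iterates (their distance to $F_e^{\perp}$) per sweep is multiplied exactly by $\prod(1-\lambda_n)$ --- the starting-point factor, which \eqref{iter} makes summable --- while the normal component is controlled by the already-established fact that accumulation points lie in $T_e$ and, locally, only in $F_e$. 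In other words, the unknown-rate part of the HLWB error is entirely absorbed into the orthogonal direction, where it is harmless. If you want to salvage your quasi-Fej\'er framework you would need a quantitative convergence rate for the HLWB iteration on polyhedra that is summable along $(n_k)$; no such rate is asserted or proved in the paper, and supplying one would be a substantial additional result rather than a routine verification.
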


The second inequality in \eqref{iter} is technical and could possibly be redundant. In fact, if the best approximation pair is unique, the convergence is assured without this inequality (see Remark~\ref{cor}). However, we are unable to remove it for Theorem~\ref{main} in the non-uniqueness case.

\section{Preliminaries for the Proof of Convergence}\label{proof}

We present several preliminary results that will be used to prove Theorem~\ref{main}, the first of which says, in Lemma \ref{compact} below, that the set of points generated by the A-HLWB algorithm is bounded. This follows from a result of Aharoni, Duchet and Wajnryb~\cite{MR757629}, see also Meshulam~\cite{MR1395472}.

\begin{theorem}[Theorem of Aharoni, Duchet and Wajnryb~\cite{MR757629}]\label{adw}
  Any sequence of points in $\rd$ obtained by successive projections of a point onto elements of a finite set of hyperplanes is bounded.
\end{theorem}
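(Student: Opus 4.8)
The plan is to use the two structural facts about projection onto a hyperplane: it is an affine, nonexpansive map, and it satisfies a Pythagorean identity relative to any point lying on the hyperplane. Write the given finite family as $H_1,\dots,H_m$ with unit normals $a_1,\dots,a_m$ and offsets $c_1,\dots,c_m$, so that $P_{H_i}(x)=x-(\langle a_i,x\rangle-c_i)a_i$ is affine, and for every $q\in H_i$ one has $\abs{P_{H_i}(x)-q}^2=\abs{x-q}^2-\abs{P_{H_i}(x)-x}^2$. The sequence in question is $x_n=P_{H_{i_n}}(x_{n-1})$ for an arbitrary index sequence $(i_n)$ with values in $\sset{1,\dots,m}$, and the aim is to bound $\abs{x_n}$ uniformly in $n$ and in the choice of $(i_n)$.

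First I would make two harmless reductions. Since $P_{H_i}$ displaces its argument only along $a_i$, the component of $x_n$ orthogonal to $W:=\operatorname{span}\sset{a_1,\dots,a_m}$ is frozen at its initial value; hence it suffices to bound the $W$-component, and we may assume the normals span $\rd$. Second, only the hyperplanes used infinitely often affect boundedness, since discarding the finitely many initial terms changes nothing, so I may assume every index occurs infinitely often in $(i_n)$.

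I would then split into two cases. If $\bigcap_i H_i\neq\varnothing$, choose $q$ in it; as $q$ lies on every hyperplane, the Pythagorean identity gives $\abs{x_n-q}\le\abs{x_{n-1}-q}$ for all $n$, confining the sequence to the ball of radius $\abs{x_0-q}$ about $q$. This is the easy, Fej\'er-monotone case. The substance lies in the case $\bigcap_i H_i=\varnothing$. Here I would exploit that, because the normals span $\rd$, the map $x\mapsto\max_i d(x,H_i)=\max_i\abs{\langle a_i,x\rangle-c_i}$ is coercive; emptiness of the intersection forces it to be bounded below by a positive constant, and, more usefully, it grows proportionally to $\abs{x}$, since $x\mapsto\max_i\abs{\langle a_i,x\rangle}$ is a genuine norm, giving constants $c>0$ and $C$ with $\max_i d(x,H_i)\ge c\abs{x}-C$. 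Thus whenever $\abs{x_n}$ is large the current point sits at distance of order $\abs{x_n}$ from some hyperplane $H_i$; when $H_i$ is next projected onto, the Pythagorean identity records a decrease of order $\abs{x_n}^2$ in the squared distance to a fixed reference point. The goal is to assemble these decreases into a Lyapunov function that strictly decreases once the iterate leaves a fixed large ball, thereby confining the sequence.

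The main obstacle is to make this decrease robust against the adversarial, order-free nature of $(i_n)$. Between two successive projections onto a fixed hyperplane the point is moved by projections onto the others, and these intermediate steps may erode the favorable distance before it is ``cashed in''; moreover a run that omits some index can be arbitrarily long. I expect to handle this by induction on the number $m$ of hyperplanes: over a maximal run omitting some index the iterates obey the theorem for a smaller family, so they stay within a controlled neighborhood of the start of the run, and it remains to show that the induced return map --- one projection onto the omitted hyperplane followed by such a run --- contracts a suitable distance function outside a bounded set. Reconciling the proportional-distance estimate with this induction, so that the guaranteed per-sweep decrease dominates the bounded drift caused by the empty intersection, is the crux, and is where essentially all the difficulty of the statement resides.
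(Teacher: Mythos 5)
First, a point of reference: the paper does not prove this statement --- it is imported as a black box from Aharoni--Duchet--Wajnryb \cite{MR757629} (see also Meshulam \cite{MR1395472}) and used only as an ingredient in Lemma~\ref{compact}. So there is no in-paper argument to compare yours against; the question is whether your sketch stands on its own. Your reductions (freezing the component orthogonal to the span of the normals, discarding a finite prefix so that every index occurs infinitely often) are sound, and the case $\bigcap_i H_i\neq\varnothing$ is correctly dispatched by Fej\'er monotonicity.

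However, the entire content of the theorem is the empty-intersection case, and there your text is a plan rather than a proof, as you yourself concede (``is the crux, and is where essentially all the difficulty of the statement resides''). Two concrete gaps. (i) The estimate $\max_i d(x,H_i)\ge c\abs{x}-C$ says that $x_n$, when far from the origin, is far from \emph{some} $H_i$; but the quadratic decrease you want to cash in is $d(y,H_i)^2$ evaluated at the point $y$ at the moment $H_i$ is actually used, and the adversarial order can interpose projections that bring the iterate arbitrarily close to $H_i$ before then. Meanwhile each projection onto a hyperplane not containing your reference point $q$ can increase $\abs{x-q}$ by as much as $\max_j d(q,H_j)$, and the runs between consecutive uses of a given index can be arbitrarily long, so the ``bounded drift'' is not actually bounded per run without further argument; your Lyapunov function is never constructed. (ii) The proposed induction on $m$ needs a \emph{quantitative, uniform} version of the theorem --- a bound on the excursion of an arbitrary run over $m-1$ hyperplanes as a function of its starting point, uniform over all orders --- which is strictly stronger than the qualitative statement being proved, and establishing that the ``return map'' contracts outside a large ball is exactly where the work of \cite{MR757629} lies. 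As it stands the proposal identifies the difficulty accurately but does not overcome it.
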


\begin{lemma}\label{compact}
  For every bounded set $D\subset\rd$, there exists a compact set $C\subset\rd$ containing $D$ such that $Q_{A,m}(b;x),Q_{B,n}(a;x)\in C$ for all $a,b,x\in C$ and $m,n\in\N$.
\end{lemma}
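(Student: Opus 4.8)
The plan is to exhibit a single compact convex set $C\supseteq D$ that is invariant under all the half-space projections $P_{A_i},P_{B_j}$; convexity then disposes of the ``pulling'' steps for free, since each iterate $Q_{B,n}(a;x)$ (resp.\ $Q_{A,m}(b;x)$) is by construction a convex combination of the anchor $a$ (resp.\ $b$) and a projection of the previous iterate, and so cannot escape a convex invariant set that contains the anchor. I note that a naive ball centered at a point of $B$ (or of $A$) will \emph{not} work: such a ball is preserved by $B$-projections but $A$-projections can push points off it by as much as the distance between the two chosen centers, so the estimate ratchets up. Convexity plus genuine invariance is what repairs this.

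First I would build the invariant set. Let $\Omega$ be the closure of the set of all points obtainable from $D$ by finitely many successive projections onto the half-spaces $A_1,\dots,A_N,B_1,\dots,B_N$. Since projecting onto a half-space is either the identity or the projection onto its bounding hyperplane, Theorem~\ref{adw} (applied uniformly over the bounded set $D$) guarantees that $\Omega$ is bounded; being closed, it is compact. By construction $D\subseteq\Omega$, and $\Omega$ is invariant under each $P_{A_i}$ and each $P_{B_j}$. I would then set $C:=\overline{\operatorname{conv}}(\Omega)$, which is compact as the closed convex hull of a compact subset of $\rd$, and which contains $D$.

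The crux is that passing to the convex hull preserves invariance under each half-space projection. Fix one half-space, say $B=\dset{y}{\langle y,\nu\rangle\le c}$ with $\abs{\nu}=1$, and write $P_B(y)=y-\phi(y)\nu$, where $\phi(y)=\max(0,\langle y,\nu\rangle-c)\ge 0$ is convex. For a finite convex combination $w=\sum_i t_i v_i$ of points $v_i\in\Omega$, linearity gives $\sum_i t_i P_B(v_i)=w-\bigl(\sum_i t_i\phi(v_i)\bigr)\nu$, whereas $P_B(w)=w-\phi(w)\nu$. Convexity of $\phi$ yields $0\le\phi(w)\le\sum_i t_i\phi(v_i)$, so $P_B(w)$ lies on the segment joining $w$ to $\sum_i t_i P_B(v_i)$; both endpoints belong to $\operatorname{conv}(\Omega)$ (the second because each $P_B(v_i)\in\Omega$ by invariance of $\Omega$), hence $P_B(w)\in\operatorname{conv}(\Omega)$. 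Continuity of $P_B$ together with closedness of $C$ extends this to all $w\in C$. Running the same argument for every $A_i$ and $B_j$ shows that $C$ is invariant under each of these projections.

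Finally I would close the loop on the pulling. For $a,x\in C$, put $z_0=x$ and $z_n=\lambda_n a+(1-\lambda_n)P_{B_n}(z_{n-1})$. Inductively, $z_{n-1}\in C$ gives $P_{B_n}(z_{n-1})\in C$ by the invariance just established, and then $z_n\in C$ because it is a convex combination (here $\lambda_n\in[0,1]$) of the two points $a,P_{B_n}(z_{n-1})\in C$ and $C$ is convex; thus $Q_{B,n}(a;x)=z_n\in C$, and symmetrically $Q_{A,m}(b;x)\in C$. I expect the main obstacle to be the convex-hull invariance step --- both in isolating the decomposition $P_B=\mathrm{id}-\phi(\cdot)\nu$ that makes it work, and in ensuring (via Theorem~\ref{adw}) that the bound on $\Omega$ is uniform over the whole bounded set $D$ rather than merely for a single starting point.
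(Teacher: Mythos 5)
Your proof is correct, and it shares the paper's overall skeleton --- Theorem~\ref{adw} for boundedness of the projection orbit, a closed convex hull as the invariant compact set, and a short induction using convexity to absorb the pulling steps --- but the crucial invariance step is done by a genuinely different argument. The paper works with the bounding \emph{hyperplanes}: each half-space projection is either the identity or the projection onto $\partial A_i$ (resp.\ $\partial B_j$), and since hyperplane projections are affine they automatically map the convex hull of the orbit $X$ into itself; the inside/outside case split is then handled within the induction on $m$. You instead prove directly that a half-space projection preserves the convex hull of an invariant set, via the decomposition $P_B=\mathrm{id}-\phi(\cdot)\nu$ with $\phi\ge 0$ convex, which places $P_B(w)$ on a segment between two points of the hull; this is a clean, self-contained lemma that avoids the case split and the detour through hyperplanes altogether. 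The one point you should nail down is the uniformity of Theorem~\ref{adw} over the bounded set $D$, which you flag but do not resolve: the theorem as stated concerns a single starting point. The paper sidesteps this by starting the orbit only from the $d+1$ vertices of a simplex containing $D$ (a finite union of bounded orbits is bounded, and the convex hull then swallows $D$); in your version it suffices to note that hyperplane projections are nonexpansive, so for any fixed finite string of projections the image of $D$ lies within $\sup_{x\in D}\abs{x-x_0}$ of the image of a single $x_0\in D$, whence the orbit of all of $D$ is bounded. With that one line added (and the tacit assumption $\lambda_n\in[0,1]$, which the paper also uses), your proof is complete.
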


\begin{proof}
  Take a simplex with vertices $x_{0},x_{1},\dots,x_{d}\in\rd$ that contains $D$. Denote the bounding hyperplanes of the half-spaces $A_{i}$ and $B_{i}$ by $\partial A_{i}$ and $\partial B_{i}$, respectively. Let $X$ be the set of points obtained by successive projections of $x_{0},x_{1},\dots,x_{d}$ on $\sseq{\partial A}{i}{1}{M}$ and on $\sseq{\partial B}{i}{1}{N}$. By Theorem~\ref{adw}, we know that $X$ is bounded, and so is its convex hull $Y:=\mathrm{conv}(X)$.

  Notice that $Q_{A,m}(b;x)$ is either $\lambda_{m}b+(1-\lambda_{m})Q_{A,m-1}(b;x)$ or $\lambda_{m}b+(1-\lambda_{m})P_{\partial A_{m}}Q_{A,m-1}(b;x)$ depending on whether $Q_{A,m-1}(b;x)$ is in $A_{m}$. One can then show by induction on $m$ that $Q_{A,m}(b;x)\in Y$ for every $b,x\in Y$.

  The same argument shows that $Q_{B,n}(a;x)\in Y$ for every $a,x\in Y$. Finally, let $C$ be the closure of $Y$ in $\rd$. Since $Q_{A,m}$ and $Q_{B,n}$ are continuous, $Q_{A,m}(b;x)$ and $Q_{B,n}(a;x)$ are in $C$ for every $a,b,x\in C$.
\end{proof}

The following result is well-known, see, e.g., \cite[Theorem 3]{MR0105008}.

\begin{theorem}\label{lipschitz}
  If $B$ is a closed convex set in Hilbert space, then the projection map $P_{B}$ onto $B$ satisfies the Lipschitz condition $\abs{P_{B}(x)-P_{B}(y)}\le\abs{x-y}$, equality holding only if $\abs{x-P_{B}(x)}=\abs{y-P_{B}(y)}$.
\end{theorem}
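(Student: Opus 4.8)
The plan is to build everything on the variational (obtuse-angle) characterization of the metric projection onto a closed convex set in Hilbert space: for any point $x$, the projection $p := P_B(x)$ is the unique point of $B$ satisfying $\langle x-p,\,z-p\rangle \le 0$ for every $z\in B$. First I would record this inequality at both $x$ and $y$. Writing $p := P_B(x)$ and $q := P_B(y)$, and using the admissible test points $z=q$ in the characterization at $x$ and $z=p$ in the characterization at $y$, I obtain
\[
  \langle x-p,\, q-p\rangle \le 0 \qquad\text{and}\qquad \langle y-q,\, p-q\rangle \le 0.
\]

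Next I would combine these. Setting $u := p-q$ and rewriting $q-p = -u$, the two inequalities become $\langle x-p,\,u\rangle \ge 0$ and $\langle y-q,\,u\rangle \le 0$. The decomposition $x-y = (x-p) + u - (y-q)$ then gives
\[
  \langle x-y,\,u\rangle = \langle x-p,\,u\rangle + \abs{u}^2 - \langle y-q,\,u\rangle \ge \abs{u}^2 .
\]
Applying the Cauchy--Schwarz inequality $\langle x-y,\,u\rangle \le \abs{x-y}\,\abs{u}$ yields $\abs{u}^2 \le \abs{x-y}\,\abs{u}$, so that $\abs{P_B(x)-P_B(y)} = \abs{u} \le \abs{x-y}$ (the case $u=0$ being trivial). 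This is the asserted non-expansiveness.

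For the equality clause I would trace back through the chain
\[
  \abs{p-q}^2 \le \langle x-y,\, p-q\rangle \le \abs{x-y}\,\abs{p-q}.
\]
If $\abs{P_B(x)-P_B(y)} = \abs{x-y}$, i.e. $\abs{p-q} = \abs{x-y}$, then the two outer quantities both equal $\abs{p-q}^2$, forcing equality everywhere in between. In particular $\langle x-y,\,p-q\rangle = \abs{x-y}\,\abs{p-q}$, which is the equality case of Cauchy--Schwarz; hence $x-y$ and $p-q$ are positively proportional, and since their norms coincide we get $x-y = p-q$ outright (when $p\ne q$; if $p=q$ then $x=y$ and the conclusion is immediate). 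Rearranging $x-y=p-q$ into $x-p = y-q$ gives $\abs{x-P_B(x)} = \abs{y-P_B(y)}$, as claimed.

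The one delicate point is the equality analysis: I must check that the assumed norm equality collapses the \emph{entire} chain, so that the Cauchy--Schwarz equality condition is genuinely available, rather than only one of the two projection inequalities becoming tight. Everything else is a direct consequence of the obtuse-angle property, which I would either cite or derive in a line from the convexity of $B$ together with the minimality of $\abs{x-p}$ over $B$.
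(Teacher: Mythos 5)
Your proof is correct. Note that the paper does not actually prove this statement --- it records it as well known and cites Theorem~3 of Cheney and Goldstein, so there is no in-paper argument to compare against. Your route is the standard one: the obtuse-angle characterization at $x$ and $y$ gives $\langle x-y,\,p-q\rangle \ge \abs{p-q}^2$, and Cauchy--Schwarz finishes the nonexpansiveness; your equality analysis is also sound, since the hypothesis $\abs{p-q}=\abs{x-y}$ pins both ends of the chain $\abs{p-q}^2 \le \langle x-y,p-q\rangle \le \abs{x-y}\,\abs{p-q}$ to the same value, which legitimately activates the Cauchy--Schwarz equality case and yields $x-y=p-q$, hence $x-p=y-q$. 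For comparison, Cheney and Goldstein's own argument expands
\begin{equation*}
  \abs{x-y}^2=\abs{(x-p)-(y-q)}^2+2\langle (x-p)-(y-q),\,p-q\rangle+\abs{p-q}^2,
\end{equation*}
where the cross term is nonnegative by the same two variational inequalities; this gives $\abs{x-y}^2\ge\abs{(x-p)-(y-q)}^2+\abs{p-q}^2$, so equality forces $x-p=y-q$ directly, without invoking the Cauchy--Schwarz equality case or splitting off the $p=q$ subcase. Both arguments are equally rigorous; the decomposition version is marginally cleaner for the equality clause, while yours isolates the nonexpansiveness estimate more transparently.
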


The classical 1959 result of Cheney and Goldstein, repeatedly referred to in this paper, is given next as a paraphrased version of their Theorems 2 and 4.

\begin{theorem}\label{cg}
  Let $A$ and $B$ be two closed convex sets in Hilbert space. A point of $A$ is nearest to $B$ if and only if it is a fixed point of $P_{A}P_{B}$. If one set is finite-dimensional and the distance between the sets is attained, then convergence of $((P_{A}P_{B})^{n}(x))$ to a fixed point of $P_{A}P_{B}$ is assured.
\end{theorem}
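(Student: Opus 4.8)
The statement splits into a fixed-point characterization and a convergence claim, and I would treat them in that order.

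For the characterization I would work from the variational description of the metric projection: in a Hilbert space, $b=P_{B}(a)$ is equivalent to $b\in B$ together with $\langle a-b,\,y-b\rangle\le 0$ for all $y\in B$, and symmetrically for $P_{A}$. Suppose first that $a\in A$ satisfies $P_{A}(P_{B}(a))=a$, and set $b:=P_{B}(a)$ and $v:=a-b$. The two projection inequalities say exactly that $A$ lies in the halfspace $\langle v,\,x-a\rangle\ge 0$ and $B$ in the halfspace $\langle v,\,y-b\rangle\le 0$, so for any $x\in A$ and $y\in B$ one gets $\langle v,\,x-y\rangle\ge\langle v,\,a-b\rangle=\abs{v}^{2}$, and Cauchy--Schwarz then yields $\abs{x-y}\ge\abs{v}=\abs{a-b}$; hence $(a,b)$ is a best approximation pair and $a$ is nearest to $B$ (the degenerate case $v=0$ meaning $A\cap B\ne\emptyset$). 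Conversely, if $a$ is nearest to $B$ and $b:=P_{B}(a)$, then $\abs{a-b}$ equals the distance between the sets, so every $a''\in A$ satisfies $\abs{a''-b}\ge\abs{a-b}$; thus $a$ is the unique nearest point of $A$ to $b$, i.e.\ $a=P_{A}(b)=P_{A}(P_{B}(a))$.

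For the convergence claim I would write $T:=P_{A}P_{B}$, $x_{n+1}:=T(x_{n})$, and $y_{n}:=P_{B}(x_{n})$, so that $x_{n+1}=P_{A}(y_{n})$ and every iterate $x_{n}$ with $n\ge 1$ lies in $A$ (taking $A$ to be the finite-dimensional set, since the iterates land in it). Using that projections are nonexpansive (Theorem~\ref{lipschitz}), I would first establish the monotonicity chain $\abs{x_{n+1}-y_{n+1}}\le\abs{x_{n+1}-y_{n}}\le\abs{x_{n}-y_{n}}$, where the outer two inequalities hold because $y_{n}\in B$ and $x_{n}\in A$ are competitors for the projections defining $y_{n+1}$ and $x_{n+1}$. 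Hence $d_{n}:=\abs{x_{n}-y_{n}}=\operatorname{dist}(x_{n},B)$ is non-increasing, converges to some $\delta\ge 0$, and by a squeeze the interlaced quantities $\abs{x_{n+1}-y_{n}}$ converge to the same $\delta$.

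The final step is to promote this to convergence of $(x_{n})$ itself, and this is where both hypotheses enter. Because the distance is attained, the characterization just proved produces a fixed point $a^{\ast}=T(a^{\ast})$; since $T$ is nonexpansive, $\abs{x_{n}-a^{\ast}}$ is non-increasing, so $(x_{n})$ is bounded, and finite-dimensionality of $A$ then yields a subsequence $x_{n_{k}}\to\bar x\in A$. By continuity of $P_{A}$ and $P_{B}$ we have $y_{n_{k}}\to\bar y:=P_{B}(\bar x)$ and $x_{n_{k}+1}\to P_{A}(\bar y)=T\bar x$, and passing to the limit in the two convergent distance sequences gives $\abs{\bar x-\bar y}=\delta$ and $\abs{T\bar x-\bar y}=\operatorname{dist}(\bar y,A)=\delta$; since $\bar x\in A$ attains the distance $\operatorname{dist}(\bar y,A)$, uniqueness of the metric projection forces $\bar x=P_{A}(\bar y)=T\bar x$, so $\bar x$ is a fixed point. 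Then the nonexpansive monotonicity of $\abs{x_{n}-\bar x}$ combined with $\abs{x_{n_{k}}-\bar x}\to 0$ shows the whole sequence converges to $\bar x$. I expect the delicate point to be exactly the identification of the cluster point as a fixed point: the monotone distances only reveal that $\bar x$ and $T\bar x$ are equidistant from $\bar y$, and one must invoke uniqueness of the projection (equivalently, the strictness clause in Theorem~\ref{lipschitz}) to collapse these into the equality $\bar x=T\bar x$.
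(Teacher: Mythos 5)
The paper itself does not prove Theorem~\ref{cg}: it is stated as a paraphrase of Theorems 2 and 4 of Cheney and Goldstein and imported by citation from \cite{MR0105008}, so there is no internal proof to compare against. Your argument is a correct, self-contained reconstruction of the classical one. The fixed-point characterization via the variational inequality $\langle a-b,\,y-b\rangle\le 0$ and the separating direction $v=a-b$ is the standard route, and the convergence proof --- monotonicity of the interlaced distances $\abs{x_n-y_n}$ and $\abs{x_{n+1}-y_n}$, boundedness via Fej\'er monotonicity with respect to a fixed point supplied by the attainment hypothesis, extraction of a convergent subsequence, identification of the cluster point as a fixed point through uniqueness of the metric projection, and the upgrade from subsequential to full convergence via the non-increasing sequence $\abs{x_n-\bar x}$ --- is sound at every step; you correctly isolate the identification of the cluster point as the delicate part. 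One loose end: you silently assume that $A$ is the finite-dimensional set (``taking $A$ to be the finite-dimensional set, since the iterates land in it''), but the hypothesis only guarantees that \emph{one} of the two sets is, and the roles of $A$ and $B$ in $P_AP_B$ are not symmetric. If only $B$ is finite-dimensional, extract the convergent subsequence from $(y_n)\subset B$ instead, obtaining $y_{n_k}\to\bar y$, set $\bar x:=P_A(\bar y)=\lim x_{n_k+1}$, and run the same identification of $\bar x$ as a fixed point; the rest of your argument then goes through unchanged. In the paper's application both sets are polyhedra in $\rd$, so the distinction is immaterial there, but for the theorem as stated in Hilbert space it should be addressed.
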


We need also the following result, which appeared in \cite[Lemma 2.2]{MR1305442}.

\begin{lemma}\label{fixed}
  Let $A$ and $B$ be two closed convex sets in Hilbert space, one of which being finite-dimensional. Suppose that the distance between the sets is attained. If $S$ is a nonempty compact set such that $P_{A}P_{B}(S)=S$, then $S$ consists of points of $A$ nearest to $B$.
\end{lemma}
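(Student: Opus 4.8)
The plan is to reduce everything to showing that $T:=P_AP_B$ fixes every point of $S$. Indeed, $S=P_AP_B(S)\subseteq A$, so each point of $S$ lies in $A$, and by the Cheney--Goldstein criterion (Theorem~\ref{cg}, whose hypotheses---one set finite-dimensional, the distance attained---are exactly those assumed here), a point of $A$ is nearest to $B$ precisely when it is a fixed point of $P_AP_B$. Since $P_A$ and $P_B$ are nonexpansive (Theorem~\ref{lipschitz}), their composition $T$ is nonexpansive, and by hypothesis it maps the compact set $S$ onto itself. So $T|_S$ is a nonexpansive surjection of the compact metric space $S$ onto itself.

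The crux, which I would isolate as an auxiliary lemma, is the classical fact that \emph{a nonexpansive surjection of a compact metric space onto itself is an isometry}; this is the step I expect to be the main obstacle, and it is where surjectivity (i.e.\ $T(S)=S$) is essential. I would prove it as follows. The iterates $\sset{T^n|_S}$ are all $1$-Lipschitz and take values in the compact set $S$, so by Arzel\`a--Ascoli the family is relatively compact in the uniform topology on $C(S,S)$. Any uniform limit $g=\lim_kT^{n_k}|_S$ is itself surjective: given $y\in S$, choose $x_k\in S$ with $T^{n_k}x_k=y$ (possible since $T^{n_k}(S)=S$), pass to a convergent subsequence $x_k\to x$, and use equicontinuity to get $g(x)=\lim_kT^{n_k}x_k=y$. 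Now pick the $n_k$ so that $m_k:=n_{k+1}-n_k\to\infty$, and pass to a further subsequence so that $T^{m_k}|_S\to h$ uniformly. Passing to the limit in $T^{n_{k+1}}=T^{m_k}\circ T^{n_k}$ (composition is jointly continuous on equicontinuous families) gives $g=h\circ g$; since $g$ is a surjection, $h=\mathrm{id}_S$. Thus $\mathrm{id}_S=\lim_jT^{m_{k_j}}|_S$ with $m_{k_j}\to\infty$, and for any $x,y\in S$ we get $\abs{x-y}=\lim_j\abs{T^{m_{k_j}}x-T^{m_{k_j}}y}\le\abs{Tx-Ty}\le\abs{x-y}$, so $T|_S$ is an isometry.

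Granting the isometry, for any $x,y\in S$ we have $\abs{x-y}=\abs{Tx-Ty}\le\abs{P_Bx-P_By}\le\abs{x-y}$, forcing equality throughout; in particular $\abs{P_Bx-P_By}=\abs{x-y}$, so the equality clause of Theorem~\ref{lipschitz} yields $\abs{x-P_Bx}=\abs{y-P_By}$. Hence the distance to $B$ is constant on $S$; call this common value $\gamma$. Now fix $s\in S$, set $b:=P_Bs\in B$ and $s':=P_Ab=Ts\in S$. Since $s\in A$ and $b\in B$,
\[
\gamma=\abs{s'-P_Bs'}\le\abs{s'-b}=\abs{P_Ab-b}\le\abs{s-b}=\abs{s-P_Bs}=\gamma,
\]
so every inequality is an equality. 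In particular $\abs{s-b}=\abs{P_Ab-b}$, meaning both $s$ and $P_Ab$ are points of $A$ nearest to $b$; by uniqueness of the metric projection onto the closed convex set $A$, we conclude $s=P_Ab=Ts$.

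Thus $T$ fixes every point of $S$, and by Theorem~\ref{cg} every point of $S$ is a point of $A$ nearest to $B$, which is the claim. The routine parts---nonexpansiveness of $T$, the elementary inequality chain, and uniqueness of projections---are standard; the entire weight of the argument rests on the isometry lemma of the second paragraph, which converts the bare set-equality $T(S)=S$ into the rigidity $\abs{Tx-Ty}=\abs{x-y}$ that drives the rest.
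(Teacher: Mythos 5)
Your proof is correct, but it takes a genuinely different route from the paper's. The paper first invokes the convergence half of Cheney--Goldstein (Theorem~\ref{cg}): iterating $P_AP_B$ from any point of $S$ converges to a fixed point, so the fixed-point set $S'\subset S$ is nonempty and compact; it then considers a point $y\in S$ farthest from $S'$, uses surjectivity to pull $y$ back to a preimage $x$, and applies the strict-inequality clause of Theorem~\ref{lipschitz} to show that a non-fixed $x$ would be moved strictly closer to $S'$, forcing $S'=S$. You instead bypass the convergence theorem entirely: you convert the set equality $P_AP_B(S)=S$ into the rigidity statement that $P_AP_B$ restricted to $S$ is an isometry, via the classical Arzel\`a--Ascoli argument that a nonexpansive surjection of a compact metric space onto itself is an isometry, and then a short equality-chase (constancy of $\abs{x-P_Bx}$ on $S$ from the equality clause of Theorem~\ref{lipschitz}, followed by uniqueness of the projection onto $A$) shows every point of $S$ is fixed. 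Both proofs hinge on the same two ingredients --- the surjectivity $P_AP_B(S)\supseteq S$ and the equality clause of Theorem~\ref{lipschitz} --- but your version isolates a reusable general lemma and needs only the fixed-point characterization from Theorem~\ref{cg}, not its convergence statement, at the cost of a longer compactness argument where the paper simply cites Cheney--Goldstein.
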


\begin{proof}
  Define $S':=\dset{s\in S}{P_{A}P_{B}(s)=s}$. Since $S$ is compact and $P_{A}P_{B}(S)\subset S$, by the second part of Theorem~\ref{cg}, for any $x\in S$, $(P_{A}P_{B})^{n}(x)$ converges in $S$ and its limit is a fixed point of $P_{A}P_{B}$.

  Since $S$ is nonempty, so is $S'$ and it is easy to see that $S'$ is compact as well.

  Let $d:=\max_{s\in S}\inf\abs{s-S'}$ and let $y\in S$ be such that $\inf\abs{y-S'}=d$. Since $P_{A}P_{B}(S)\supset S$, there exists $x\in S$ such that $P_{A}P_{B}(x)=y$. Since $\min\abs{x-S'}\le d$, we can take $s'\in S'$ such that $\abs{x-s'}\le d$.

  By way of contradiction, assume that $x\notin S'$. By the first part of Theorem~\ref{cg},
  \begin{equation*}
    \abs{P_{B}(s')-s'}=\inf\abs{A-B}<\abs{P_{B}(x)-x}.
  \end{equation*}
  By Theorem~\ref{lipschitz}, we obtain
  \begin{equation*}
    \abs{y-s'} = \abs{P_{A}P_{B}(x)-P_{A}P_{B}(s')} \le
    \abs{P_{B}(x)-P_{B}(s')} < \abs{x-s'}.
  \end{equation*}
  This contradicts with $\abs{x-s'}\le d\le\abs{y-s'}$. Therefore, $x\in S'$, hence $y=x$ and $d=0$. By the first part of Theorem~\ref{cg}, $S=S'$ implies that $S$ consists only of points of $A$ nearest to $B$.
\end{proof}

The last ingredient that will be used in our proof of Theorem \ref{main} is the following.

\begin{theorem}\label{thm7}
  Let $B$ be a polyhedron in Hilbert space, and assume that $B=\cap_{i=1}^{N}B_{i}\neq\emptyset$, where $\sseq{B}{i}{1}{N}$ are closed convex sets. If the sequence $(\lambda_{n})$ satisfies \eqref{ctrl-1}, then $\lim_{n\to\infty}\abs{Q_{B,n}(a;x)-P_{B}(a)}=0$ for any points $a$ and $x$.
\end{theorem}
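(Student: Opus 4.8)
The plan is to read the recursion $Q_{B,n}(a;x)=\lambda_n a+(1-\lambda_n)P_{B_n}(Q_{B,n-1}(a;x))$ as a cyclic Halpern-type iteration with anchor $a$ and arbitrary starting point $x$, and to deduce its strong convergence to $P_B(a)$ from Bauschke's \cite[Theorem 3.1]{MR1402593}, quoted above for the half-space case. Since the $B_i$ are now arbitrary closed convex sets, the task reduces to verifying that the three ingredients on which that argument rests --- boundedness of the iterates, firm nonexpansiveness of the projections, and the coincidence of the fixed-point set of the cyclic composition with $B$ --- all survive this generalization, and that the parameter hypotheses hold.

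First I would settle boundedness directly (it also follows from Lemma~\ref{compact}). Fixing any $w\in B$, each $P_{B_i}$ is nonexpansive and fixes $w$, so $\abs{P_{B_n}(Q_{B,n-1}(a;x))-w}\le\abs{Q_{B,n-1}(a;x)-w}$, and the convexity estimate
\[
  \abs{Q_{B,n}(a;x)-w}\le\lambda_n\abs{a-w}+(1-\lambda_n)\abs{Q_{B,n-1}(a;x)-w}
\]
yields, by induction on $n$, the uniform bound $\abs{Q_{B,n}(a;x)-w}\le\max\sset{\abs{a-w},\abs{x-w}}$. This is what lets the firm-nonexpansiveness and demiclosedness estimates underlying Bauschke's theorem run.

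The one structural fact that genuinely uses the convex-set hypothesis is the identity $\operatorname{Fix}(P_{B_N}\cdots P_{B_1})=\bigcap_{i=1}^N B_i=B$, together with the same statement for each of the $N$ cyclic shifts of the composition; this is precisely the regularity condition Bauschke requires. I would prove it in the standard way. The inclusion $\supseteq$ is clear, since every point of $B$ is fixed by each $P_{B_i}$. For $\subseteq$, take $w\in B$ and a fixed point $z$ of the composition, write $u_0=z,u_1=P_{B_1}(u_0),\dots,u_N=P_{B_N}(u_{N-1})=z$, and chain the firm-nonexpansiveness inequality $\abs{u_i-w}^2\le\abs{u_{i-1}-w}^2-\abs{u_i-u_{i-1}}^2$. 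Since $u_N=u_0$, summing over the cycle forces every $\abs{u_i-u_{i-1}}$ to vanish, so $z$ is fixed by each $P_{B_i}$ and hence lies in $B$; the same relabelling handles the cyclic shifts.

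Finally I would observe that the parameter conditions \eqref{ctrl-1} are exactly Bauschke's hypotheses, the period-$N$ bounded-variation requirement $\sum_n|\lambda_n-\lambda_{n+N}|<\infty$ being the one matched to the $N$-cyclic order of the projections. With boundedness, firm nonexpansiveness, the fixed-point identity, and \eqref{ctrl-1} all in place, \cite[Theorem 3.1]{MR1402593} applies and delivers $\lim_{n\to\infty}\abs{Q_{B,n}(a;x)-P_B(a)}=0$ for arbitrary $a$ and $x$. The only point that is genuinely new relative to the half-space formulation is the fixed-point identity above --- the sole step where the hypothesis $B\neq\emptyset$ is used --- and I expect it to be the main obstacle, everything else being a direct appeal to Bauschke's theorem.
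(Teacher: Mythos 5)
Your proposal is correct and follows essentially the same route as the paper, which likewise proves this statement by a direct appeal to Bauschke's Theorem 3.1 in \cite{MR1402593} (noting that it already covers fixed-point sets of nonexpansive mappings, hence closed convex sets). The extra details you supply --- the boundedness estimate, and the firm-nonexpansiveness argument showing $\operatorname{Fix}(P_{B_N}\cdots P_{B_1})=\bigcap_{i=1}^{N}B_{i}$ together with its cyclic shifts --- are a sound verification of Bauschke's hypotheses that the paper leaves implicit.
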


\begin{proof}
  This follows from Bauschke's Theorem 3.1 in \cite{MR1402593}. In fact, Bauschke's theorem applies to a broader setting, in which the $B_{i}$'s are sets of fixed points of nonexpansive mappings in Hilbert space.
\end{proof}

It is easy to check that
\begin{equation*}
  \abs{Q_{B,n}(a;x)-Q_{B,n}(a';x')} \le \abs{a-a'} + \abs{x-x'}
\end{equation*}
for all $n$. Together with the fact that $P_{B}$ is nonexpansive (i.e., $1$-Lipschitz), it is routine to check the uniform convergence of $(Q_{B,n})$ on any compact set, leading to the next lemma.

\begin{lemma}\label{uniform}
  Let $B$ be as in Theorem~\ref{thm7}, and let $C$ be a compact set in the Hilbert space. If the sequence $(\lambda_{n})$ satisfies \eqref{ctrl-1}, then
  \begin{equation*}
    \lim_{n\to\infty}\left(\sup_{a,x\in C}\abs{Q_{B,n}(a;x)-P_{B}(a)}\right)=0.
  \end{equation*}
\end{lemma}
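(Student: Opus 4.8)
The plan is to deduce uniform convergence on the compact set $C\times C$ from the pointwise convergence supplied by Theorem~\ref{thm7}, combined with the uniform equicontinuity recorded just before the lemma, through a standard finite-net $\epsilon/3$ argument. Two facts drive the proof. First, the displayed inequality $\abs{Q_{B,n}(a;x)-Q_{B,n}(a';x')}\le\abs{a-a'}+\abs{x-x'}$ shows that the maps $(a,x)\mapsto Q_{B,n}(a;x)$ are all Lipschitz, with constant $1$ in each variable \emph{independently of} $n$. Second, since $P_{B}$ is nonexpansive, the limit map $(a,x)\mapsto P_{B}(a)$ is continuous. Hence the error maps $E_{n}(a,x):=Q_{B,n}(a;x)-P_{B}(a)$ form a uniformly equicontinuous family converging pointwise to $0$ on $C\times C$.

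Concretely, fix $\epsilon>0$ and pick $\delta>0$ with $3\delta<\epsilon$. Equip $C\times C$ with the metric $\rho((a,x),(a',x')):=\abs{a-a'}+\abs{x-x'}$ and, using compactness, choose a finite $\delta$-net $(a_{1},x_{1}),\dots,(a_{m},x_{m})$. For an arbitrary $(a,x)\in C\times C$, select a net point $(a_{i},x_{i})$ with $\rho((a,x),(a_{i},x_{i}))\le\delta$ and split
\[
  \abs{Q_{B,n}(a;x)-P_{B}(a)}\le\abs{Q_{B,n}(a;x)-Q_{B,n}(a_{i};x_{i})}+\abs{Q_{B,n}(a_{i};x_{i})-P_{B}(a_{i})}+\abs{P_{B}(a_{i})-P_{B}(a)}.
\]
The first term is at most $\abs{a-a_{i}}+\abs{x-x_{i}}\le\delta$ by the equicontinuity estimate, and the third is at most $\abs{a_{i}-a}\le\delta$ by nonexpansiveness of $P_{B}$; crucially, both bounds hold for \emph{every} $n$.

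Finally, Theorem~\ref{thm7} gives pointwise convergence at each of the finitely many net points, so there is $n_{0}$ (depending on $\epsilon$ and the net, hence on $C$) with $\abs{Q_{B,n}(a_{i};x_{i})-P_{B}(a_{i})}<\delta$ for all $n\ge n_{0}$ and all $i$. For $n\ge n_{0}$ the three-term bound gives $\abs{Q_{B,n}(a;x)-P_{B}(a)}<3\delta<\epsilon$ uniformly in $(a,x)$, which is the assertion. I do not expect a genuine obstacle, as the lemma is a routine strengthening of Theorem~\ref{thm7}; the single point meriting care is that the Lipschitz constant in the equicontinuity estimate is the same for all $n$, since it is exactly this uniformity that allows one finite net to work simultaneously for every large $n$ and thereby converts pointwise into uniform convergence.
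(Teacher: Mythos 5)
Your proposal is correct and follows essentially the same route as the paper's own proof: a finite net on the compact set, the $n$-independent Lipschitz bound $\abs{Q_{B,n}(a;x)-Q_{B,n}(a';x')}\le\abs{a-a'}+\abs{x-x'}$ together with nonexpansiveness of $P_{B}$, and Theorem~\ref{thm7} applied at the finitely many net points. The only difference is cosmetic bookkeeping ($3\delta$ versus the paper's $4\epsilon$).
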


\begin{proof}
  For every $\epsilon > 0$, let $C_0$ be a finite $\epsilon$-covering of the compact set $C$. By Theorem~\ref{thm7}, for every $a_0, x_0 \in C_0$, there is $N(a_0, x_0) \in \N$ such that $\abs{Q_{B,n}(a_0;x_0) - P_B(a_0)} < \epsilon$ for all $n > N(a_0, x_0)$. Set $N := \max_{a_0,x_0\in C_0}N(a_0,x_0)$. Given $a, x \in C$, let $a_0, x_0 \in C_0$ be such that $\abs{a-a_0}, \abs{x-x_0} < \epsilon$. For every $n > N$, since both $Q_{B,n}$ and $P_B$ are nonexpansive, we have
  \begin{multline*}
    \abs{Q_{B,n}(a;x) - P_B(a)} \le \abs{Q_{B,n}(a;x) - Q_{B,n}(a_0;x_0)} + \abs{Q_{B,n}(a_0;x_0) - P_B(a_0)} \\ + \abs{P_B(a_0) - P_B(a)} \le \abs{a-a_0} + \abs{x-x_0} + \epsilon + \abs{a_0 - a} < 4\epsilon. \qedhere
  \end{multline*}
\end{proof}

\section{Convergence of the A-HLWB Algorithm}\label{sec:Proof}

In this section we present a proof of the convergence theorem of the A-HLWB algorithm.

\begin{proof}[Proof of Theorem~\ref{main}]
  In order to prove this theorem we inspect the set of accumulation points of $(a_{2k})$. We show that it is compact, fixed under $P_{A}P_{B}$ and, finally, that it is a singleton. By Lemma~\ref{compact}, there exists a compact set $C\subset\rd$ containing $\{a_{0}\}\cup\dseq{a_{2k}'}{k}{0}{\infty}\cup\dseq{b_{2k+1}'}{k}{0}{\infty}$ such that both $Q_{A,m}$ and $Q_{B,n}$ map $C\times C$ to $C$, hence the sequences $(a_{2k})$ and $(b_{2k+1})$ are contained in $C$.

  Let $S$ be the set of accumulation points of $(a_{2k})$. By the Bolzano--Weierstrass theorem $S\neq\emptyset$. Moreover, since $S$ is closed and $S\subset C$, it is compact.

  We claim that $P_{A}P_{B}(S)=S$. Pick any point $s\in S$ and any $\epsilon>0$. Using Lemma~\ref{uniform} and the first assumption in \eqref{iter} that $n_{k}\to\infty$, one can choose $k$ sufficiently large such that
  \begin{equation*}
    \abs{a_{2k}-s}<\epsilon,\quad\sup_{a,x\in
    C}\abs{Q_{B,n_{2k}}(a;x)-P_{B}(a)}<\epsilon/2,\quad\sup_{a,x\in
    C}\abs{Q_{A,n_{2k+1}}(b;x)-P_{A}(b)}<\epsilon/2.
  \end{equation*}
  In particular, since $b_{2k+1}=Q_{B,n_{2k}}(a_{2k};a_{2k}')$ and $a_{2k+2}=Q_{A,n_{2k+1}}(b_{2k+1};b_{2k+1}')$, we have
  \begin{equation*}
    \abs{P_{B}(a_{2k})-b_{2k+1}}<\epsilon/2,\quad\abs{P_{A}(b_{2k+1})-a_{2k+2}
    }<\epsilon/2.
  \end{equation*}
  By the triangle inequality and the fact that $P_{A}$ and $P_{B}$ are nonexpansive, we obtain that
  \begin{multline}\label{triangle}
      \abs{P_{A}P_{B}(s)-a_{2k+2}} \le \abs{P_{A}P_{B}(s)-P_{A}P_{B}(a_{2k})}
      + \abs{P_{A}P_{B}(a_{2k})-P_{A}(b_{2k+1})} \\ + \abs{P_{A}(b_{2k+1})-a_{2k+2}} 
      \le \abs{s-a_{2k}} + \abs{P_{B}(a_{2k})-b_{2k+1}} +
      \abs{P_{A}(b_{2k+1})-a_{2k+2}} < 2\epsilon.
  \end{multline}
  This implies that $P_{A}P_{B}(s)$ is also an accumulation point of $(a_{2k})$. Thus, $P_{A}P_{B}(S)\subset S$. On the other hand, suppose that $s\in S$ is the limit of the subsequence $(a_{2k_{l}})$. Let $s'\in S$ be an accumulation point of the subsequence $(a_{2k_{l}-2})$. The same argument for $P_{A}P_{B}(S)\subset S$ shows that $P_{A}P_{B}(s')$ is an accumulation point of $(a_{2k_{l}})$, and so is $P_{A}P_{B}(s')=s$. This means that $P_{A}P_{B}(S)\supset S$. By Lemma~\ref{fixed}, $S$ consists of points of $A$ nearest to $B$.

  It remains to be shown that $S$ is a singleton, namely that it contains only one point, which is then the limit of $(a_{2k})$. This is clear if there is only one best approximation pair.

  From here on we consider the case that $A$ and $B$ have parallel closest faces. This situation requiers a deeper and more delicate analysis which we give now.

  \begin{figure}
    \centering
    \begin{minipage}{.7\textwidth}
      \centering
      \begin{tikzpicture}
        \coordinate (A) at (0,0);
        \coordinate (B) at (2,1);
        \coordinate (C) at (1,3);
        \coordinate (T1) at (2/3, 1/3);
        \coordinate (T2) at (4/3, 2/3);
        \coordinate (T3) at (5/3, 5/3);
        \coordinate (T4) at (4/3, 7/3);
        \coordinate (T5) at (2/3, 6/3);
        \coordinate (T6) at (1/3, 3/3);
        \coordinate (LA) at (-2,0);
        \coordinate (LB) at (-2,1);
        \coordinate (LC) at (-1,3);
        \draw[darkgray] (A)--(B)--(C)--(A);
        \fill[litegray, opacity=0.3] (A)--(B)--(C)--(A);
        \node at (-0.5,2) {$A$};
        \draw[darkgray] (T2)--(T3);
        \draw[darkgray] (T4)--(T5);
        \draw[darkgray] (T6)--(T1);
        \fill[litegray, opacity=0.5] (T1)--(T2)--(T3)--(T4)--(T5)--(T6)--cycle;
        \node at (1, 1.3) {$T$};
        \draw[darkgray, latex-] (1,13/6)--node[above] {$v$} (5,13/6);
        \draw[darkgray] (A)--(LA);
        \draw[darkgray, dashed] (B)--(LB);
        \draw[darkgray] (C)--(LC);
        \def\shift{1/3}
        \coordinate (D) at (4, 2-\shift);
        \coordinate (E) at (6, 3-\shift);
        \coordinate (F) at (5, 0-\shift);
        \coordinate (RD) at (7, 2-\shift);
        \coordinate (RE) at (8, 3-\shift);
        \coordinate (RF) at (7, 0-\shift);
        \draw[darkgray] (E)--(D)--(F);
        \draw[darkgray, dashed] (E)--(F);
        \fill[litegray, opacity=0.3] (D)--(E)--(F)--cycle;
        \draw[darkgray] (D)--(RD);
        \draw[darkgray] (E)--(RE);
        \draw[darkgray] (F)--(RF);
        \node at (6,1-\shift) {$B$};
      \end{tikzpicture}
      \captionof{figure}{}
      \label{fig_t}
    \end{minipage}%
    \begin{minipage}{.3\textwidth}
      \centering
      \begin{tikzpicture}
        \coordinate (A) at (0:1);
        \coordinate (B) at (60:1);
        \coordinate (C) at (120:1);
        \coordinate (D) at (180:1);
        \coordinate (E) at (240:1);
        \coordinate (F) at (300:1);
        \def\gap{0.3}
        \coordinate (e1) at (30:\gap);
        \coordinate (e2) at (90:\gap);
        \coordinate (e3) at (150:\gap);
        \coordinate (e4) at (210:\gap);
        \coordinate (e5) at (270:\gap);
        \coordinate (e6) at (330:\gap);
        \fill[litegray, opacity=0.5] (A)--(B)--(C)--(D)--(E)--(F)--(A);
        \tikzstyle{f1}=[arrows={Arc Barb[reversed]-Arc Barb[reversed]}]
        \draw[darkgray][f1] ($(A) + (e1)$)--($(B) + (e1)$);
        \draw[darkgray][f1] ($(B) + (e2)$)--($(C) + (e2)$);
        \draw[darkgray][f1] ($(C) + (e3)$)--($(D) + (e3)$);
        \draw[darkgray][f1] ($(D) + (e4)$)--($(E) + (e4)$);
        \draw[darkgray][f1] ($(E) + (e5)$)--($(F) + (e5)$);
        \draw[darkgray][f1] ($(F) + (e6)$)--($(A) + (e6)$);
        \fill[darkgray] (0:1+\gap) circle (0.05cm);
        \fill[darkgray] (60:1+\gap) circle (0.05cm);
        \fill[darkgray] (120:1+\gap) circle (0.05cm);
        \fill[darkgray] (180:1+\gap) circle (0.05cm);
        \fill[darkgray] (240:1+\gap) circle (0.05cm);
        \fill[darkgray] (300:1+\gap) circle (0.05cm);
        \node at (0, 1.5) {};
        \node at (0, -1.5) {};
      \end{tikzpicture}
      \captionof{figure}{}
      \label{decomposition}
    \end{minipage}
  \end{figure}

  Let $v$ be the shortest vector of the form $a-b$, where $a\in A$ and $b\in B$. Put differently, $v$ is the projection of the origin onto $A-B$. Set $T:=A\cap(B+v)$. Clearly, $T$ is precisely the set of all points in $A$ nearest to $B$. Therefore $S\subset T$. Moreover, $T$ is a convex polyhedron inside the supporting hyperplane of $A$ that is perpendicular to $v$ (see Figure~\ref{fig_t}.)

  We decompose the polyhedron $T$ into the relative interiors of its faces (see Figure~\ref{decomposition}.) Let $e$ be the largest integer such that the relative interior of some $e$-dimensional face $F_{e}$ intersects $S$, say at point $s$. We shall prove that $(a_{2k})$ converges to $s$. Since, by Lemma~\ref{uniform}, $\lim_{k \to \infty} \abs{b_{2k+1} - P_{B}(a_{2k})} = 0$, this will imply that $(b_{2k+1})$ converges to $P_{B}(s)$, and $(a_{2k},b_{2k+1})$ thus converges to the best approximation pair $(s,P_{B}(s))$.

  The proof that $\lim_{k\to\infty} a_{2k} = s$ combines ideas from the two extreme cases for $e$, namely $e=0$ or $e=d-1$. We first handle these two cases and then present the general case.

  \paragraph{Case 1: $e=0$.} Suppose that all points of $S$ are vertices of $T$. Let $\epsilon_{0}>0$ be such that $N_{\epsilon_{0}}(s)$ (the $\epsilon_{0}$-neighborhood of $s$) satisfies $N_{\epsilon_{0}}(s)\cap T_{0}=\sset{s}$. Denoting by $T_{0}$ the set of all accumulation points of $(a_{2k})$, $S\subset T_{0}$ implies that every neighborhood of $T_{0}$ contains all but finitely many points of $(a_{2k})$. For every $\epsilon\in(0,\epsilon_{0}/4)$, we can then choose $k_{0}\in\N$ so that
  \begin{equation*}
    \abs{a_{2k_{0}}-s} < \frac{\epsilon}{4}, \quad \inf\abs{a_{2k}-T_{0}} <
    \frac{\epsilon}{4} \text{ for all } k \ge k_{0},
  \end{equation*}
  and, using Lemma~\ref{uniform},
  \begin{equation}\label{unifa}
    \sup_{b,x\in C}\abs{Q_{A,n}(b;x)-P_{A}(b)} < \frac{\epsilon}{4}, \;
    \sup_{a,x\in C}\abs{Q_{B,n}(a;x)-P_{B}(a)} < \frac{\epsilon}{4},
    \text{ for all } n \ge n_{2k_{0}}.
  \end{equation}
  We claim that for every $k\ge k_{0}$, if $\abs{a_{2k}-s}<\epsilon/4$, then $\abs{a_{2k+2}-s}<\epsilon_{0}/2$ (see Figure~\ref{e0}) and so $\abs{a_{2k+2}-s}=\inf\abs{a_{2k+2}-T_{0}}$, by the choice of $\epsilon_{0}$. In fact, \eqref{triangle} implies
  \begin{equation*}
    \abs{a_{2k+2}-s} \le \abs{a_{2k}-s} + \abs{P_{B}(a_{2k}) - b_{2k+1}} +
    \abs{P_{A}(b_{2k+1})-a_{2k+2}} < \epsilon/4 + \epsilon/4 + \epsilon/4 <
    \epsilon_{0}/2.
  \end{equation*}
  Hence, $N_{\epsilon}(a_{2k+2})\subset N_{\epsilon_{0}}(s)$, and so
  \begin{equation*}
    N_{\epsilon}(a_{2k+2}) \cap T_{0} \subset N_{\epsilon_{0}}(s) \cap
    T_{0}=\sset{s}.
  \end{equation*}
  This means that
  \begin{equation*}
    \abs{a_{2k+2}-s} = \inf\abs{a_{2k+2}-T_{0}} < \epsilon/4.
  \end{equation*}
  By induction, we know that $\abs{a_{2k}-s} < \epsilon/4$ for all $k \ge k_{0}$.

  \begin{figure}
    \centering
    \begin{minipage}[c]{0.4\textwidth}
      \centering
      \begin{tikzpicture}
        \clip(-3,-1.5) rectangle (3,1.5);
        \coordinate (A) at (0,0);
        \coordinate (G) at (0.6, 0);
        \coordinate (H) at (1.3, 0);
        \fill[darkgray] (A) circle (0.05);
        \draw[darkgray, dashed] (A) circle (3);
        \draw[darkgray, dashed] (A)--node[above] {$\epsilon_0$} (-3,0);
        \draw[darkgray, dashed] (A) circle (0.7);
        \fill[darkgray] (G) circle (0.05);
        \fill[darkgray] (H) circle (0.05);
        \draw[darkgray, dashed] (H) circle (1.4);
        \node at (A) [below right] {$s$};
        \node at (G) [below right] {$a_{2k}$};
        \node at (H) [below right] {$a_{2k+2}$};
      \end{tikzpicture}
      \captionof{figure}{}
      \label{e0}
    \end{minipage}%
    \begin{minipage}{0.6\textwidth}
      \centering
      \begin{tikzpicture}
        \coordinate (A) at (-2, 0);
        \coordinate (B) at (2, 0);
        \coordinate (C) at (-1.5, -0.5);
        \coordinate (D) at (1.5, -1.2);
        \coordinate (E) at (1.6, -0.75);
        \node at (A) [left] {$s$};
        \node at (B) [right] {$P_B(s)$};
        \fill[litegray, opacity=0.3] (-2, 1.5)--(-2, -1.5)--(-4, -1.5)--(-4, 1.5);
        \fill[litegray, opacity=0.3] (2, 1.5)--(2, -1.5)--(4, -1.5)--(4, 1.5);
        \fill[darkgray] (A) circle (0.05);
        \fill[darkgray] (B) circle (0.05);
        \draw[darkgray, latex-] (A)--node[above] {$v$} (B);
        \node[above] at (-3,-1.5) {$A$};
        \node[above] at (3,-1.5) {$B$};
        \draw[darkgray] (-2,1.5)--(-2,-1.5);
        \draw[darkgray, dashed] (B) circle (1.4);
        \node[above, fill=white, rounded corners=2pt, inner sep=1pt] at (-2,1.15) {$T\subset H$};
        \draw[darkgray, dashed] (-2.4, 0)--(-4, 0) node[left] {$T^\perp$};
        \draw[darkgray, dashed] (3.2,0)--(4,0);
        \draw[darkgray, dashed] (4, -0.5)--(-4, -0.5) node[left] {$T^\perp(a_{2k})$};
        \fill[darkgray] (C) circle (0.05) node[below] {$a_{2k}$};
        \fill[darkgray] (D) circle (0.05);
        \fill[darkgray] (E) circle (0.05) node[right] {$b_{2k+1}$};
        \draw[darkgray, -latex, decorate, decoration={
          zigzag, segment length=4, amplitude=.9, post=lineto,
          post length=2pt}] (D)--(E);
        \node[darkgray, fill=white, rounded corners=2pt, inner sep=1pt] at (1.25,-1.2) {$q_l$};
        \node at (0,1.5) {};
        \node at (0,-1.5) {};
      \end{tikzpicture}
      \captionof{figure}{}
      \label{e1}
    \end{minipage}
  \end{figure}

  \paragraph{Case 2: $e=d-1$.} Suppose $s$ is in the relative interior of the $(d-1)$-dimensional face $T=T_{e}$. Let $H$ be the supporting hyperplane of $A$ that is perpendicular to $v$. Note that $T\subset A\cap H$. If $\partial A_{i}$ goes through $s$, then $\partial A_{i}=H$ (otherwise, $T$ would have dimension smaller than $d-1.$) Similarly, if $\partial B_{i}$ goes through $P_{B}(s)=s-v$, then $\partial B_{i}=H-v$.

  Thus, we can choose an $\epsilon_{0}$-neighborhood $N_{\epsilon_{0}}(s)$ of $s$ such that $N_{\epsilon_{0}}(s)\subset A_{i}$ for all $\partial A_{i}\neq H$ and such that $N_{\epsilon_{0}}(P_{B}(s))\subset B_{i}$ for all $\partial B_{i}\neq H-v$. This, in particular, implies that for every $x\in N_{\epsilon_{0}}(P_{B}(s))$, $P_{B_{i}}(x)-x$ is always orthogonal to $T$.

  For the rest of the proof of this case for $e=d-1$, we only highlight the key steps and leave the full-fledged proof to the general case below. For a fixed $k\ge k_{0}$, let $T^{\perp}$ and $T^{\perp}(a_{2k+2})$ be the lines orthogonal to $T$ through $s$ and through $a_{2k}$, respectively, and define $\ell^{+} := \ell + n_{2k_{0}}$ and $q_{\ell} := Q_{B,\ell^{+}}(a_{2k};a'_{2k})$ (see Figure~\ref{e1}). We claim that if $q_{\ell}\in N_{\epsilon_{0}}(P_{B}(s))$ for all $\ell \ge 0$, then
  \begin{equation*}
    \inf\abs{b_{2k+1}-T^{\perp}(a_{2k})} =
    \inf\abs{q_{0}-T^{\perp}(a_{2k})} \cdot
    \prod_{n>n_{2k_{0}}}^{n_{2k}}(1-\lambda_{n}).
  \end{equation*}
  Indeed, recall that for $\ell \ge 1$, we have the recursion
  \begin{equation}\label{recursion}
    q_{\ell} = \lambda_{\ell^{+}}a_{2k} +
    (1-\lambda_{\ell^{+}})P_{B_{\ell^{+}}}(q_{\ell-1}).
  \end{equation}
  Since $q_{\ell-1}\in N_{\epsilon_{0}}(P_{B}(s))$, by the choice of $\epsilon_{0}$, we know that $P_{B_{\ell^{+}}}(q_{\ell-1})-q_{\ell-1}$ is orthogonal to $T$, hence
  \begin{equation*}
    \inf\abs{P_{B_{\ell^{+}}}(q_{\ell-1})-T^{\perp}} =
    \inf\abs{q_{\ell-1}-T^{\perp}}.
  \end{equation*}
  Therefore,
  \begin{equation*}
    \inf\abs{q_{\ell}-T^{\perp}} =
    (1-\lambda_{\ell^{+}})\inf\abs{q_{\ell-1}-T^{\perp}}.
  \end{equation*}
  The claim follows from repeated application of this equality and the fact that $b_{2k+1}=q_{n_{2k}-n_{2k_{0}}}$.

  Loosely speaking, the claim suggests that $b_{2k+1}$ does not deviate far from $T^{\perp}(a_{2k})$, and, similarly, neither does $a_{2k+2}$ deviate from $T^{\perp}(b_{2k+1})$. The accumulated deviations of $a_{2k}$ from $T^{\perp}$ then can be bounded. This, together with the fact that $a_{2k}$ converges to a point in $T$, implies that $\lim_{k\to\infty}a_{2k}=s$.

  \paragraph{General case: $0\le e\le d-1$.} The proof of the general case is the juxtaposition of the ideas in the two previos cases above. Let $T_{e}$ be the union of the $e$-dimensional faces of $T$. Since $s$ is in the relative interior of $F_{e}$, we can choose $\epsilon_{0}>0$ so that the following hold:
  \begin{enumerate}[noitemsep]
    \item $N_{\epsilon_{0}}(s)\cap T_{e}\subset F_{e}$;
    \item for every $x\in N_{\epsilon_{0}}(s)$, $P_{A_{m}}(x)-x$ is orthogonal
    to $F_{e}$ for all $m$;
    \item for every $x\in N_{\epsilon_{0}}(P_{B}(s))$, $P_{B_{n}}(x)-x$ is
    orthogonal to $F_{e}$ for all $n$.
  \end{enumerate}
  In view of \eqref{iter}, there is a constant $Z > 1$ such that
  \begin{equation}\label{bound}
    \sum_{l>k_{0}} \prod_{n>n_{2k_{0}}}^{n_{2l+1}}(1-\lambda_{n}) < Z
    \text{ for all } k_{0} \in \N.
  \end{equation}
  For every $\epsilon\in(0,\epsilon_{0}/4)$, we can choose $k_{0}\in\N$ so that
  \eqref{unifa} and the following hold.
  \begin{equation*}
    \abs{a_{2k_{0}}-s} < \frac{\epsilon}{4Z}, \quad \inf\abs{a_{2k}-T_{e}} <
    \frac{\epsilon}{4Z} \text{ for all } k \ge k_{0}.
  \end{equation*}
  Let $F_{e}^{\perp}$ be the $(d-e)$-dimensional affine subspace through $s$ orthogonal to $F_{e}$. We prove by induction that for all $k\ge k_{0}$,
  \begin{subequations}\label{induction}
    \begin{align}
      \inf\abs{a_{2k}-F_{e}} & < \frac{\epsilon}{4Z},\label{inda}\\
      \inf\abs{a_{2k}-F_{e}^{\perp}} & < \frac{\epsilon}{4Z} +
      \frac{\epsilon}{2Z} \cdot
      \sum_{l>k_{0}}^{k}\prod_{n>n_{2k_{0}}}^{n_{2l+1}}(1-\lambda_{m}),
      \label{indb}\\
      \abs{a_{2k}-s} & < \epsilon.\label{indc}
    \end{align}
  \end{subequations}
  When $k=k_{0}$, it is obvious from the choice of $k_{0}$. Assume that \eqref{induction} holds for $k$, we prove it for $k+1$.

  \noindent \textbf{Proof of \eqref{inda}.} We use \eqref{triangle} to obtain
  \begin{equation*}
    \abs{s-a_{2k+2}} \le \abs{s-a_{2k}} + \abs{P_{B}(a_{2k})-b_{2k+1}} +
    \abs{P_{A}(b_{2k+1})-a_{2k+2}} < \epsilon + \frac{\epsilon}{4Z} + \frac{\epsilon}{4Z} <
    \frac{\epsilon_0}{2}.
  \end{equation*}
  Hence, $N_{2\epsilon}(a_{2k+2})\subset N_{\epsilon_{0}}(s)$, and so
  \begin{equation*}
    N_{2\epsilon}(a_{2k+2})\cap T_{e} \subset N_{\epsilon_{0}}(s)\cap
    T_{e}\subset F_{e}.
  \end{equation*}
  This means that
  \begin{equation*}
    \inf\abs{a_{2k+2}-F_{e}}=\inf\abs{a_{2k+2}-T_{e}}<\frac{\epsilon}{4Z}.
  \end{equation*}

  \begin{figure}
    \centering \begin{tikzpicture}
      \coordinate (A) at (-0,-3);
      \coordinate (O) at (0,0);
      \coordinate (B) at (0,1);
      \coordinate (S) at (0,-1.4);
      \coordinate (T) at (1,-1.6);
      \coordinate (D) at (4.5,-2.5);
      \coordinate (E) at (4.6,-2);
      \fill[litegray, opacity=0.3] (O)++(-1, -0.5) -- ++(-1,1) -- ++(3,0) -- ++(1,-1);
      \node at (-1.5, 0) [left] {$F_e^\perp$};
      \fill[litegray, opacity=0.3] (S)++(-1, -0.5) -- ++(-1,1) -- ++(8,0) -- ++(1,-1);
      \node at (-1.5,-1.4) [left] {$F_e^\perp(a_{2k})$};
      \fill[darkgray] (O) circle (0.05) node[left] {$s$};
      \fill[darkgray] (S) circle (0.05) node[left] {$s'$};
      \draw[darkgray] (A) -- (B) node[left] {$F_e$};
      \fill[darkgray] (T) circle (0.05) node[right] {$a_{2k}$};
      \draw[darkgray, dashed] (S) -- (T);
      \fill[darkgray] (S)++(5,0) circle (0.05) node[right] {$P_B(s')$};
      \draw[darkgray, -latex, decorate, decoration={
        zigzag, segment length=4, amplitude=.9, post=lineto,
        post length=2pt}] (D) -- (E);
      \fill[darkgray] (D) circle (0.05) node[left] {$q_{l}$};
      \fill[darkgray] (E) circle (0.05) node[right] {$b_{2k+1}$};
    \end{tikzpicture}
    \captionof{figure}{}
    \label{general}
  \end{figure}

  \noindent \textbf{Proof of \eqref{indb}.} Again, define
  $\ell^{+}:=\ell+n_{2k_{0}}$ and $q_{\ell}:=Q_{B,\ell^{+}}(a_{2k};a'_{2k})$.
  For every $\ell\ge0$, we have
  \begin{equation}\label{ql}
    \abs{P_{B}(s)-q_{\ell}} \le \abs{s-a_{2k}} + \abs{P_{B}(a_{2k})-q_{\ell}} <
    \epsilon + \frac{\epsilon}{4Z} < \epsilon_{0}.
  \end{equation}
  Let $F_{e}^{\perp}(a_{2k})$ be the $(d-k)$-dimensional subspace through $a_{2k}$ orthogonal to $F_{e}$ (see Figure~\ref{general}). We have the recursion \eqref{recursion} for $\ell \ge 1$. Since $\abs{P_{B}(s)-q_{\ell-1}} < \epsilon_{0}$, by the choice of $\epsilon_{0}$, we know that $P_{B_{\ell^{+}}}(q_{\ell-1})-q_{\ell-1}$ is orthogonal to $F_{e}$, hence,
  \begin{equation*}
    \inf\abs{P_{B_{\ell^{+}}}(q_{\ell-1})-F_{e}^{\perp}(a_{2k})} =
    \inf\abs{q_{\ell-1}-F_{e}^{\perp}(a_{2k})}.
  \end{equation*}
  Therefore,
  \begin{equation}\label{step}
    \inf\abs{q_{\ell}-F_{e}^{\perp}(a_{2k})} =
    (1-\lambda_{\ell^{+}})\inf\abs{q_{\ell-1}-F_{e}^{\perp}(a_{2k})}.
  \end{equation}
  Let $s'$ be the intersection of $F_{e}$ and $F_{e}^{\perp}(a_{2k})$, which is guaranteed to be nonempty by $\abs{a_{2k}-s}<\epsilon<\epsilon_{0}$. Moreover, we have
  \begin{equation*}
    \abs{a_{2k}-s'}=\inf\abs{a_{2k}-F_{e}}<\frac{\epsilon}{4Z}.
  \end{equation*}
  Since $s'\in T$, $P_{B}(s')=s'+v\in F_{e}^{\perp}(a_{2k})$. Notice that
  \begin{equation*}
    \inf\abs{q_{0}-F_{e}^{\perp}(a_{2k})} \le \abs{q_{0}-P_{B}(s')} \le
    \abs{q_{0}-P_{B}(a_{2k})} + \abs{a_{2k}-s'} \le \frac{\epsilon}{4Z} + \frac{\epsilon}{4Z} =
    \frac{\epsilon}{2Z}.
  \end{equation*}
  With repeated application of \eqref{step} and the fact that $b_{2k+1} = q_{n_{2k}-n_{2k_{0}}}$, we derive
  \begin{equation}\label{dist-b}
    \inf\abs{b_{2k+1}-F_{e}^{\perp}(a_{2k})} \le \frac{\epsilon}{2Z} \cdot
    \prod_{n>n_{2k_{0}}}^{n_{2k}}(1-\lambda_{m}).
  \end{equation}
  Let $F_{e}^{\perp}(b_{2k+1})$ be the $(d-k)$-dimensional subspace through $b_{2k+1}$ orthogonal to $F_{e}$. Notice that \eqref{ql} implies that
  \begin{equation*}
    \abs{b_{2k+1}-P_{B}(s)}<\epsilon+\frac{\epsilon}{4Z}.
  \end{equation*}
  This would allow us to carry out a similar argument to conclude that
  \begin{equation}\label{dist-a}
    \inf\abs{a_{2k+2}-F_{e}^{\perp}(b_{2k+1})} <
    \frac{\epsilon}{2Z}\cdot\prod_{n>n_{2k_{0}}}^{n_{2k+1}}(1-\lambda_{m}).
  \end{equation}
  Combining \eqref{dist-b} and \eqref{dist-a}, we get what is needed for the inductive step.

  \noindent \textbf{Proof of \eqref{indc}.} It follows from
  \begin{equation*}
    \abs{a_{2k+2}-s} \le \inf\abs{a_{2k+2}-F_{e}} +
    \inf\abs{a_{2k+2}-F_{e}^{\perp}}
  \end{equation*}
  and (\ref{bound}, \ref{inda}, \ref{indb}).
\end{proof}

\begin{remark}\label{cor}
  As mentioned in the proof, if the polyhedra $A$ and $B$ are known a priori to have only one best approximation pair, then the set $S$ is automatically a singleton, hence $(a_{2k}, b_{2k+1})$ converges to the best approximation pair. In this case, the second inequality in \eqref{iter} could be dropped from the assumptions in Theorem~\ref{main}.
\end{remark}

\section{Discussion}\label{discussion}

\begin{figure}
  \centering
  \begin{minipage}{0.5\textwidth}
    \centering
    \begin{tikzpicture}[scale=0.3]
      \clip(-12,-14) rectangle (12,14);
      \draw[litegray, dashed] (-12,-14) -- (12,-14) -- (12,14) -- (-12,14) -- cycle;
      \coordinate (A) at (-8,-9);
      \coordinate (A0) at (-7,-15);
      \coordinate (A1) at (-4,-11);
      \coordinate (A2) at (-4,-7);
      \coordinate (A3) at (-6,-5);
      \coordinate (A4) at (-7,-5);
      \fill[red, opacity=0.2] (A0)++(-5,0) -- (A0) -- (A1) -- (A2) -- (A3) -- (A4) -- ++(-5,0);
      \fill[darkgray] (8, -13) circle (0.17) node[right] {$a_0$};
      \node at (A) {$A$};
      \coordinate (B0) at (18,15);
      \coordinate (B1) at (10,5);
      \coordinate (B2) at (8,4);
      \coordinate (B3) at (4,5);
      \coordinate (B4) at (-1,15);
      \coordinate (B) at (7,9);
      \fill[blue, opacity=0.2] (B0) -- (B0) -- (B1) -- (B2) -- (B3) -- (B4);
      \node at (B) {$B$};
      \draw[red, opacity=0.0] plot [only marks, mark size=4, mark=*] coordinates {(8.0, -13.0)};
      \draw[red, opacity=0.04] plot [only marks, mark size=4, mark=*] coordinates {(0.3, -9.6)};
      \draw[red, opacity=0.08] plot [only marks, mark size=4, mark=*] coordinates {(-1.625, -8.75)};
      \draw[red, opacity=0.12] plot [only marks, mark size=4, mark=*] coordinates {(-2.1063, -8.5375)};
      \draw[red, opacity=0.16] plot [only marks, mark size=4, mark=*] coordinates {(-2.2266, -8.4844)};
      \draw[red, opacity=0.2] plot [only marks, mark size=4, mark=*] coordinates {(-2.0628, -4.818)};
      \draw[red, opacity=0.24] plot [only marks, mark size=4, mark=*] coordinates {(-2.3671, -4.1471)};
      \draw[red, opacity=0.28] plot [only marks, mark size=4, mark=*] coordinates {(-0.2942, -3.1659)};
      \draw[red, opacity=0.32] plot [only marks, mark size=4, mark=*] coordinates {(0.7432, -2.0749)};
      \draw[red, opacity=0.36] plot [only marks, mark size=4, mark=*] coordinates {(-2.8412, -1.6707)};
      \draw[red, opacity=0.4] plot [only marks, mark size=4, mark=*] coordinates {(-5.3726, -3.7201)};
      \draw[red, opacity=0.44] plot [only marks, mark size=4, mark=*] coordinates {(-5.4175, -3.8094)};
      \draw[red, opacity=0.48] plot [only marks, mark size=4, mark=*] coordinates {(-4.0984, -2.9737)};
      \draw[red, opacity=0.52] plot [only marks, mark size=4, mark=*] coordinates {(-5.3648, -4.2201)};
      \draw[red, opacity=0.56] plot [only marks, mark size=4, mark=*] coordinates {(-3.6528, -2.8058)};
      \draw[red, opacity=0.6] plot [only marks, mark size=4, mark=*] coordinates {(-4.9869, -4.3838)};
      \draw[red, opacity=0.64] plot [only marks, mark size=4, mark=*] coordinates {(-4.9864, -4.5454)};
      \draw[red, opacity=0.68] plot [only marks, mark size=4, mark=*] coordinates {(-5.066, -4.6148)};
      \draw[red, opacity=0.72] plot [only marks, mark size=4, mark=*] coordinates {(-4.8811, -4.3541)};
      \draw[red, opacity=0.76] plot [only marks, mark size=4, mark=*] coordinates {(-5.5908, -4.8733)};
      \draw[red, opacity=0.8] plot [only marks, mark size=4, mark=*] coordinates {(-4.9605, -4.3153)};
      \draw[red, opacity=0.84] plot [only marks, mark size=4, mark=*] coordinates {(-5.0963, -4.4198)};
      \draw[red, opacity=0.88] plot [only marks, mark size=4, mark=*] coordinates {(-5.4201, -4.6735)};
      \draw[red, opacity=0.92] plot [only marks, mark size=4, mark=*] coordinates {(-5.5062, -4.727)};
      \draw[red, opacity=0.96] plot [only marks, mark size=4, mark=*] coordinates {(-5.5734, -4.7768)};
      \draw[red, opacity=1.0] plot [only marks, mark size=4, mark=*] coordinates {(-5.8257, -4.9922)};
      \draw[blue, opacity=0.02] plot [only marks, mark size=4, mark=square*] coordinates {(4.6, -6.2)};
      \draw[blue, opacity=0.06] plot [only marks, mark size=4, mark=square*] coordinates {(0.75, -4.5)};
      \draw[blue, opacity=0.1] plot [only marks, mark size=4, mark=square*] coordinates {(-0.2125, -4.075)};
      \draw[blue, opacity=0.14] plot [only marks, mark size=4, mark=square*] coordinates {(-0.4531, -3.9688)};
      \draw[blue, opacity=0.18] plot [only marks, mark size=4, mark=square*] coordinates {(0.4953, 0.8625)};
      \draw[blue, opacity=0.22] plot [only marks, mark size=4, mark=square*] coordinates {(0.3137, 2.1437)};
      \draw[blue, opacity=0.26] plot [only marks, mark size=4, mark=square*] coordinates {(2.5966, 2.3365)};
      \draw[blue, opacity=0.3] plot [only marks, mark size=4, mark=square*] coordinates {(3.2493, 2.3128)};
      \draw[blue, opacity=0.34] plot [only marks, mark size=4, mark=square*] coordinates {(2.953, 1.6587)};
      \draw[blue, opacity=0.38] plot [only marks, mark size=4, mark=square*] coordinates {(0.4726, 1.8782)};
      \draw[blue, opacity=0.42] plot [only marks, mark size=4, mark=square*] coordinates {(-0.4211, 4.5248)};
      \draw[blue, opacity=0.46] plot [only marks, mark size=4, mark=square*] coordinates {(1.3691, 4.1185)};
      \draw[blue, opacity=0.5] plot [only marks, mark size=4, mark=square*] coordinates {(1.1835, 3.3819)};
      \draw[blue, opacity=0.54] plot [only marks, mark size=4, mark=square*] coordinates {(2.7157, 5.2395)};
      \draw[blue, opacity=0.58] plot [only marks, mark size=4, mark=square*] coordinates {(1.9745, 4.8585)};
      \draw[blue, opacity=0.62] plot [only marks, mark size=4, mark=square*] coordinates {(2.1738, 4.0929)};
      \draw[blue, opacity=0.66] plot [only marks, mark size=4, mark=square*] coordinates {(2.5017, 4.2441)};
      \draw[blue, opacity=0.7] plot [only marks, mark size=4, mark=square*] coordinates {(2.7161, 4.3662)};
      \draw[blue, opacity=0.74] plot [only marks, mark size=4, mark=square*] coordinates {(2.8366, 4.9175)};
      \draw[blue, opacity=0.78] plot [only marks, mark size=4, mark=square*] coordinates {(3.0603, 4.5856)};
      \draw[blue, opacity=0.82] plot [only marks, mark size=4, mark=square*] coordinates {(3.2767, 4.6702)};
      \draw[blue, opacity=0.86] plot [only marks, mark size=4, mark=square*] coordinates {(3.321, 4.9571)};
      \draw[blue, opacity=0.9] plot [only marks, mark size=4, mark=square*] coordinates {(3.4201, 4.9654)};
      \draw[blue, opacity=0.94] plot [only marks, mark size=4, mark=square*] coordinates {(3.6796, 4.9313)};
      \draw[blue, opacity=0.98] plot [only marks, mark size=4, mark=square*] coordinates {(3.6354, 4.8438)};
    \end{tikzpicture}
    \captionof{figure}{}
    \label{exp1}
  \end{minipage}%
  \begin{minipage}[c]{0.5\textwidth}
    \centering
    \begin{tikzpicture}[scale=0.3]
      \clip(-12,-14) rectangle (12,14);
      \draw[litegray, dashed] (-12,-14) -- (12,-14) -- (12,14) -- (-12,14) -- cycle;
      \coordinate (A) at (-8,-9);
      \coordinate (A0) at (-7,-15);
      \coordinate (A1) at (-4,-11);
      \coordinate (A2) at (-4,-7);
      \coordinate (A3) at (-6,-5);
      \coordinate (A4) at (-7,-5);
      \fill[red, opacity=0.2] (A0)++(-5,0) -- (A0) -- (A1) -- (A2) -- (A3) -- (A4) -- ++(-5,0);
      \fill[darkgray] (8, -13) circle (0.17) node[right] {$a_0$};
      \node at (A) {$A$};
      \coordinate (B0) at (18,15);
      \coordinate (B1) at (10,5);
      \coordinate (B2) at (8,4);
      \coordinate (B3) at (4,5);
      \coordinate (B4) at (-1,15);
      \coordinate (B) at (7,9);
      \fill[blue, opacity=0.2] (B0) -- (B0) -- (B1) -- (B2) -- (B3) -- (B4);
      \node at (B) {$B$};
      \draw[red, opacity=0.0] plot [only marks, mark size=4, mark=*] coordinates {(8.0, -13.0)};
      \draw[red, opacity=0.04] plot [only marks, mark size=4, mark=*] coordinates {(0.3, -9.6)};
      \draw[red, opacity=0.08] plot [only marks, mark size=4, mark=*] coordinates {(-1.625, -7.05)};
      \draw[red, opacity=0.12] plot [only marks, mark size=4, mark=*] coordinates {(-2.7063, -5.4375)};
      \draw[red, opacity=0.16] plot [only marks, mark size=4, mark=*] coordinates {(-3.3416, -4.4106)};
      \draw[red, opacity=0.2] plot [only marks, mark size=4, mark=*] coordinates {(-4.9679, -2.074)};
      \draw[red, opacity=0.24] plot [only marks, mark size=4, mark=*] coordinates {(-6.2018, -0.8914)};
      \draw[red, opacity=0.28] plot [only marks, mark size=4, mark=*] coordinates {(-3.8235, -2.3683)};
      \draw[red, opacity=0.32] plot [only marks, mark size=4, mark=*] coordinates {(-1.5509, -1.4677)};
      \draw[red, opacity=0.36] plot [only marks, mark size=4, mark=*] coordinates {(-3.0433, -1.2538)};
      \draw[red, opacity=0.4] plot [only marks, mark size=4, mark=*] coordinates {(-5.4684, -3.6025)};
      \draw[red, opacity=0.44] plot [only marks, mark size=4, mark=*] coordinates {(-5.4764, -3.8028)};
      \draw[red, opacity=0.48] plot [only marks, mark size=4, mark=*] coordinates {(-4.1367, -2.9531)};
      \draw[red, opacity=0.52] plot [only marks, mark size=4, mark=*] coordinates {(-5.3807, -4.2033)};
      \draw[red, opacity=0.56] plot [only marks, mark size=4, mark=*] coordinates {(-3.6569, -2.8027)};
      \draw[red, opacity=0.6] plot [only marks, mark size=4, mark=*] coordinates {(-4.987, -4.3838)};
      \draw[red, opacity=0.64] plot [only marks, mark size=4, mark=*] coordinates {(-4.9866, -4.5453)};
      \draw[red, opacity=0.68] plot [only marks, mark size=4, mark=*] coordinates {(-5.066, -4.6148)};
      \draw[red, opacity=0.72] plot [only marks, mark size=4, mark=*] coordinates {(-4.8811, -4.3541)};
      \draw[red, opacity=0.76] plot [only marks, mark size=4, mark=*] coordinates {(-5.5908, -4.8733)};
      \draw[red, opacity=0.8] plot [only marks, mark size=4, mark=*] coordinates {(-4.9605, -4.3153)};
      \draw[red, opacity=0.84] plot [only marks, mark size=4, mark=*] coordinates {(-5.0963, -4.4198)};
      \draw[red, opacity=0.88] plot [only marks, mark size=4, mark=*] coordinates {(-5.4201, -4.6735)};
      \draw[red, opacity=0.92] plot [only marks, mark size=4, mark=*] coordinates {(-5.5062, -4.727)};
      \draw[red, opacity=0.96] plot [only marks, mark size=4, mark=*] coordinates {(-5.5734, -4.7768)};
      \draw[red, opacity=1.0] plot [only marks, mark size=4, mark=*] coordinates {(-5.8257, -4.9922)};
      \draw[blue, opacity=0.02] plot [only marks, mark size=4, mark=square*] coordinates {(4.6, -6.2)};
      \draw[blue, opacity=0.06] plot [only marks, mark size=4, mark=square*] coordinates {(0.75, -4.5)};
      \draw[blue, opacity=0.1] plot [only marks, mark size=4, mark=square*] coordinates {(-1.4125, -3.825)};
      \draw[blue, opacity=0.14] plot [only marks, mark size=4, mark=square*] coordinates {(-2.6831, -3.3838)};
      \draw[blue, opacity=0.18] plot [only marks, mark size=4, mark=square*] coordinates {(-1.8359, 2.7103)};
      \draw[blue, opacity=0.22] plot [only marks, mark size=4, mark=square*] coordinates {(-2.8992, 3.6195)};
      \draw[blue, opacity=0.26] plot [only marks, mark size=4, mark=square*] coordinates {(0.4498, 5.5267)};
      \draw[blue, opacity=0.3] plot [only marks, mark size=4, mark=square*] coordinates {(1.7077, 3.8308)};
      \draw[blue, opacity=0.34] plot [only marks, mark size=4, mark=square*] coordinates {(1.7399, 2.4924)};
      \draw[blue, opacity=0.38] plot [only marks, mark size=4, mark=square*] coordinates {(0.3257, 2.1783)};
      \draw[blue, opacity=0.42] plot [only marks, mark size=4, mark=square*] coordinates {(-0.5781, 4.5774)};
      \draw[blue, opacity=0.46] plot [only marks, mark size=4, mark=square*] coordinates {(1.3104, 4.2112)};
      \draw[blue, opacity=0.5] plot [only marks, mark size=4, mark=square*] coordinates {(1.156, 3.42)};
      \draw[blue, opacity=0.54] plot [only marks, mark size=4, mark=square*] coordinates {(2.7078, 5.2541)};
      \draw[blue, opacity=0.58] plot [only marks, mark size=4, mark=square*] coordinates {(1.974, 4.8587)};
      \draw[blue, opacity=0.62] plot [only marks, mark size=4, mark=square*] coordinates {(2.1734, 4.0936)};
      \draw[blue, opacity=0.66] plot [only marks, mark size=4, mark=square*] coordinates {(2.5017, 4.2441)};
      \draw[blue, opacity=0.7] plot [only marks, mark size=4, mark=square*] coordinates {(2.7162, 4.3661)};
      \draw[blue, opacity=0.74] plot [only marks, mark size=4, mark=square*] coordinates {(2.8366, 4.9175)};
      \draw[blue, opacity=0.78] plot [only marks, mark size=4, mark=square*] coordinates {(3.0603, 4.5855)};
      \draw[blue, opacity=0.82] plot [only marks, mark size=4, mark=square*] coordinates {(3.2767, 4.6702)};
      \draw[blue, opacity=0.86] plot [only marks, mark size=4, mark=square*] coordinates {(3.321, 4.9571)};
      \draw[blue, opacity=0.9] plot [only marks, mark size=4, mark=square*] coordinates {(3.4201, 4.9654)};
      \draw[blue, opacity=0.94] plot [only marks, mark size=4, mark=square*] coordinates {(3.6796, 4.9313)};
      \draw[blue, opacity=0.98] plot [only marks, mark size=4, mark=square*] coordinates {(3.6354, 4.8438)};
    \end{tikzpicture}
    \captionof{figure}{}
    \label{exp2}
  \end{minipage}
\end{figure}

One has the freedom to choose the auxiliary sequence $(a_{2k}',b_{2k+1}')$ in the A-HLWB algorithm as long as it is bounded. The simplest way is to take $a_{2k}'=b_{2k+1}'=a_{0}$. In Figure \ref{exp1}, we run 50 iterations of the algorithm, and we plot the more recent points in darker color. The half-spaces are
\begin{align*}
  A_{1}\colon & 4x-3y \le 17,&
  A_{2}\colon & x \le -4, &
  A_{3}\colon & x + y \le -11, &
  A_{4}\colon & y \le -5; \\
  B_{1}\colon & 5x-4y \le 30, &
  B_{2}\colon & x-2y \le 0, &
  B_{3}\colon & -x - 4y \le -24, &
  B_{4}\colon & -2x - y \le -13.
\end{align*}
The choices of the parameters are
\begin{equation*}
  a_{0} = (8,-13), \quad \lambda_{n} = \tfrac{1}{n+1}, \quad
  n_{k} = \lfloor1.1^{k}\rfloor, \quad a_{2k}' = b_{2k+1}' = a_{0}.
\end{equation*}

The auxiliary point $a_{2k}'$ can be seen as the starting point of the HLWB algorithm for polyhedron $B$. It makes sense to choose $a_{2k}'$ close to $B$. Since $b_{2k-1}$ is our best approximation to $B$ so far, heuristically $a_{2k}'=b_{2k-1}$ might be a better choice. Similarly, it might be better to choose $b_{2k+1}'=a_{2k}$. One can use Lemma~\ref{compact} to show that $(a_{2k}',b_{2k+1}')$ is bounded. In Figure~\ref{exp2}, we again run 50 iterations for the same half-spaces and the parameters, except that
\begin{equation*}
  a_{0}' = a_{0}, \quad a_{2k}' = b_{2k-1} \text{ for all } k > 0, \quad
  b_{2k+1}' = a_{2k} \text{ for all } k \ge 0.
\end{equation*}

Comparing Figure~\ref{exp1} and Figure~\ref{exp2}, interestingly we do not see significant difference in convergence. One possible explanation is the following. It is easily checked that
\begin{equation*}
  \abs{Q_{B,n_{2k}}(a;x)-Q_{B,n_{2k}}(a;x')}\le\abs{x-x'}{\textstyle
  \prod_{n=1}^{n_{2k}}(1-\lambda_{n}).}
\end{equation*}
When $\prod_{n=1}^{n_{2k}}(1-\lambda_{n})$ is extremely small, the contribution of the auxiliary points is negligible.

Now that the convergence of the A-HLWB algorithm has been established here, it would be interesting in future work to investigate non-asymptotic bounds on the number of steps of half-space projections to reach an approximate solution and rate of convergence results.

\section*{Acknowledgements}

We thank Yehuda Zur for Matlab programming work at the early stages of our research.

\bibliographystyle{jalpha}
\bibliography{best_approx_pair}

\end{document}